\DeclareMathOperator{\rep}{rep}
\newcommand{\seq}[3]{({#1}_{#2})_{#2 \ge #3}}
\begin{document}
\title{Combinatorics on words and generating Dirichlet series of automatic sequences
}
\titlerunning{Combinatorics on words and automatic Dirichlet series}
%
\author{Jean-Paul Allouche\inst{1}\orcidID{0000-0002-9060-0784} 
\and
Jeffrey Shallit\inst{2}\orcidID{0000-0003-1197-3820} 
\and
Manon Stipulanti\inst{3}\orcidID{0000-0002-2805-2465}}
\authorrunning{J.-P. Allouche, J. Shallit, and M. Stipulanti}
%
\institute{CNRS, IMJ-PRG, Sorbonne, Paris, France \\
\email{jean-paul.allouche@imj-prg.fr}\\ 
\and
School of Computer Science, University of Waterloo, Waterloo, Canada \\
\email{shallit@uwaterloo.ca} \\
\and
Department of Mathematics, ULiège,
Liège, Belgium\\
\email{m.stipulanti@uliege.be}}
\maketitle              
\begin{abstract}
Generating series are crucial in enumerative combinatorics,
analytic combinatorics, and combinatorics on words. Though it 
might seem at first view that generating Dirichlet series are less 
used in these fields than ordinary and exponential generating series, 
there are many notable papers where they play a fundamental role, as 
can be seen in particular in the work of Flajolet and several of his 
co-authors. In this paper, we study Dirichlet series of integers with 
missing digits or blocks of digits in some integer base $b$; i.e., 
where the summation ranges over the integers whose 
expansions form some language strictly included in the set of all 
words over the alphabet $\{0, 1, \dots, b-1\}$ that do not begin with 
a $0$. We show how to unify and extend results proved by Nathanson 
in 2021 and by K\"ohler and Spilker in 2009.
En route, we encounter several sequences from Sloane's On-Line Encyclopedia of Integer Sequences, as well as some famous $b$-automatic sequences or $b$-regular sequences.
We also consider a specific sequence that is not $b$-regular.
\keywords{Combinatorics on words \and Generating Dirichlet series \and Automatic sequences \and Missing digits \and Restricted words.}

\bigskip

\textbf{2020 Mathematics Subject Classification:} 68R15 (primary), 05A15, 11B85, 11M41, 40A05 (secondary)
\end{abstract}
\section{Introduction}

Combinatorics (and in particular combinatorics on words) frequently 
uses generating functions. The point of generating functions is to
``synthesize'' or ``concentrate'' the properties of 
the sequence of its coefficients in a single function. There are three main types of 
generating functions associated with a sequence 
$(s_n)_{n \ge 0}$:
\begin{itemize}
\item[ ]{$\star$} the ordinary generating function: 
\ \ \ \ \ \ $\displaystyle\sum_{n \geq 0} s_n x^n$;

\item[ ]{$\star$} the exponential generating function: 
\ \ $\displaystyle\sum_{n \geq 0} \frac{s_n}{n!} x^n$;

\item[ ]{$\star$} the Dirichlet generating  function: 
\ \ \ \ \ \ $\displaystyle\sum_{n \geq 1} \frac{s_n}{n^z}$, where $z \in {\mathbb C}$.
\end{itemize}

Note that these series can be considered either as ``formal'' series
or as complex (or real) functions only defined in some {\it ad hoc} 
domain. An excellent source for generating functions is Wilf's book
\cite{Wilf-2006} (also available online).
At first glance, it might seem that ordinary and exponential generating 
series are more commonly used in (enumerative) combinatorics, while 
Dirichlet series are more used in number theory. However, this impression is certainly biased due, in particular, to the most famous such series,
namely the Riemann zeta function $\zeta: z\mapsto \sum_{n\ge 1} 1/n^z$. To get easily 
convinced that generating Dirichlet series are \emph{also} used in combinatorics, 
it suffices to mention the strong link between ordinary 
generating functions and generating Dirichlet functions through the
Mellin transform (see, for example, the paper of Wintner~\cite{Wintner-1947}, 
who cites the 1859 pioneering paper of
Riemann\footnote{See a transcription at 
\url{https://www.claymath.org/sites/default/files/zeta.pdf}.}), 
or the 
many papers of the \emph{virtuosi} Flajolet and co-authors, who make use of 
the Mellin transform. In particular one can consult the ``bible'' 
by Flajolet and Sedgewick \cite{Flajolet-Sedgewick-2009}, from which 
we do not resist quoting the following lines:
\begin{quote}
{\it [...] It is possible to go much further as first shown 
by De Bruijn, Knuth, and Rice in a landmark paper 
\cite{de-Bruijn-Knuth-Rice-1972}, which also constitutes a historic 
application of Mellin transforms in analytic combinatorics. (We refer 
to this paper for historical context and references.)}
\end{quote}

Playing around with harmonic series and Dirichlet series with ``missing 
terms'' in the summation range, we revisited the 1914 paper of Kempner
\cite{Kempner1914}, where he studied the series $\sum' 1/n$; see~\cref{th:Kempner} presented below. Here 
$\sum'$ means that the summation ranges over integers with no occurrence of the
digit $9$ in their base-$10$-expansion. 
The \emph{abscissa of convergence} (whose definition is recalled in~\cref{sec:abscissa}) of the Dirichlet series $\sum' 1/n^z$ is less than or equal to $1$. This abscissa of convergence is worth determining 
     precisely, not only for its own sake, but also because it is related to the singularities of the series.
Actually, it is natural 
to look at series with a more general flavor.  Following this, we 
came across two papers: namely, a 2009 paper by K\"ohler and 
Spilker \cite{Kohler-Spilker-2009}, and a 2021 paper by Nathanson 
\cite{Nathanson2021}, where the abscissa of convergence 
of ``restricted'' Dirichlet series was determined.
Here, ``restricted'' means that 
integers having some digits or some blocks of digits in their 
$b$-ary expansion are excluded from the summation range. 

Within the topic of restricted Dirichlet series, our contribution is threefold.
First, it appears that a classical result (\cref{lem:Titchmarsh}) about the abscissa of convergence of Dirichlet series used in \cite{Kohler-Spilker-2009} provides a (shorter?) proof of results in \cite{Nathanson2021}.
Our first contribution is thus to uncover and give credit to~\cref{lem:Titchmarsh}, and provide the alternative proof of the results in \cite{Nathanson2021} in~\cref{sec:short-proof}.
Second, it turns out that results of K\"ohler and 
Spilker \cite{Kohler-Spilker-2009} can be extended to generalize those of Nathanson \cite{Nathanson2021}.
So, in~\cref{sec:myriad}, we prove results giving the abscissa of convergence of some specific restricted Dirichlet series (see~\cref{thm:abscissa-convergence-blocks-12-89,thm:abscissa-convergence-blocks-12-21,thm: avoiding block of a} below).
In these cases, the abscissa of convergence is equal to $\log_b \lambda$, where $\lambda$ is the dominant root of a certain polynomial with integer coefficients, and $b$ is the base of the considered numeration system.
The examples taken in~\cref{sec:myriad} illustrate how \cref{lem:Titchmarsh} can be used beyond Nathanson's considerations on the one hand, and K\"ohler and Spilker's on the other: in all cases, the key is to \emph{count} words of a prescribed form.
In addition, our examples appear in Sloane's On-Line Encyclopedia of Integer Sequences~\cite{Sloane}.
In~\cref{sec:alternative approach}, we provide yet a different method and show how to recover our results from the general study of the asymptotics of summatory functions of so-called $b$-regular sequences (see~\cref{sec:notation and def} for definitions).
Moreover, with~\cref{thm: general abscissa of convergence automatic sec}, we provide a general statement on the abscissa of convergence of Dirichlet series associated with languages having automatic characteristic sequences (see again~\cref{sec:notation and def} for definitions).
In~\cref{sec:unusual}, we go beyond this setting and, as a third contribution, we show how our initial method can be applied to some non-regular Dirichlet series.
We finish the paper with~\cref{sec:conclusion} where we conclude and expose some related open problems.

As a final comment to this introduction, we would also like to add that related work
is contained in the papers 
\cite{Allouche-Shallit-Skordev-2005,Allouche-Yu-Morin-2024,Birmajer-Gil-Weiner-2016,Burnol-3,Burnol-1,Burnol-2,Cumberbatch,Zeilberger2023,Karki-Lacroix-Rigo-2009,Karki-Lacroix-Rigo-2010,MukherjeeSarkar},
in the papers of Janji\'c cited at the beginning of~\cref{sec:myriad},
and the paper of Noonan and Zeilberger~\cite{Noonan-Zeilberger-1999}.

\subsection{Notation and definitions}
\label{sec:notation and def}

We let ${\mathbb N}$ denote the set of non-negative integers
$\{0, 1, 2, \dots\}$. A subset $Y$ of a set $X$ is called
\emph{proper} if $Y \neq X$.
We let $|X|$ denote the \emph{cardinality} of the finite set $X$.
(Also note that we will use the notation 
$|x|$ to denote the absolute value of the real number $x$. 
There should be no confusion between the two uses of $| \ | $.)

\medskip

Let $\seq{s}{n}{1}$ be a sequence of real numbers. Its
\emph{summatory function} is the sequence $(A(n))_{n \geq 1}$ of its \emph{partial sums} defined by $A(n) = \sum_{i=1}^n s_i$ for each 
$n\ge 1$. (Following the usual convention for a summation over an 
empty set of indices, we set $A(0) = 0$.)
We also warn the reader that most sequences in this text will be indexed starting at $1$ (except $b$-ary expansions as mentioned below).

\medskip

An \emph{alphabet} is a finite set. A (finite) \emph{word} over an alphabet $\Sigma$
is a finite sequence of letters from $\Sigma$.
The \emph{length} of a word is the number of letters it is made of.
The \emph{empty word} is the only word of length $0$.
For all $n\ge 0$, we let $\Sigma^n$ denote the 
set of all length-$n$ words over $\Sigma$.
We let $\Sigma^*$ denote the set of words over $\Sigma$ (including the empty word). It is equipped with the concatenation of words,
which makes it a monoid (called the \emph{free monoid} on $\Sigma$).

\medskip

Let $b\ge 2$ be an integer. If the expansion of an integer $n$ in the 
base-$b$ numeration system is $n = \sum_{0 \leq j \leq k} w_j b^j$,
with $w_j \in [0,b-1]=\{0, 1, \dots, b-1\}$ and $w_k \neq 0$,
then the \emph{base-$b$} or \emph{$b$-ary representation} of $n$, written 
$\rep_b(n)$, is the word $\rep_b(n) = w_k w_{k-1} \cdots w_0$ over the alphabet $[0,b-1]$ 
(we warn the reader to pay attention to the order of indices and to the fact that they end at $0$).
The base-$b$ representation of $0$ is the empty word.
A sequence $\seq{s}{n}{0}$ is \emph{$b$-automatic} if its \emph{$b$-kernel} $\{(s(b^e n+r))_n \ | \ e \geq 0, r \in [0, b^k-1]\}$ is finite.
For more on these sequences, see, for example, the books \cite{Allouche-Shallit-2003,Fogg-2002,Haeseler-2003}.
As automatic sequences take only finitely many values, Allouche and Shallit introduced generalizations to infinite alphabets, called $b$-regular sequences~\cite{Allouche-Shallit-1992}: a sequence $\seq{s}{n}{0}$ is \emph{$b$-regular} if there exist vectors $V,W$ and matrices $M_0,\ldots,M_{b-1}$ such that, if $\rep_b(n) = w_k w_{k-1} \dots w_0$, then $s(n)= V M_{w_k} \cdots M_{w_0} W$ for all $n\ge 0$.
The set of vectors $V,W$ and matrices $M_0,\ldots,M_{b-1}$ form a \emph{linear representation} of 
$\seq{s}{n}{0}$.
\medskip

For two sequences $\seq{u}{n}{0}, \seq{v}{n}{0}$, we recall the notation $u_n \sim v_n$, which means that there exists an integer $n_0$ such that  $v_n \neq 0$ for $n \geq n_0$ and $\lim_{n \to +\infty} \frac{u_n}{v_n} = 1$.
We also write $u_n = \Theta(v_n)$ if there exist positive constants $c_1,c_2$ such that $c_1 v_n \le u_n \le c_2 v_n$ for all sufficiently large $n$.
Similarly, we write $u_n = O(v_n)$ if there exists a positive constant $c$ such that $u_n \le c v_n$ for all sufficiently large $n$.
If a sequence of real numbers $\seq{u}{n}{0}$ satisfies a recurrence relation of the form $u_{n+k}=c_{k-1} u_{n+k-1} + \cdots + c_0 u_{n}$ for some real coefficients $c_0,\ldots,c_{k-1}$ and having characteristic polynomial $\chi(x)=x^k - c_{k-1} x^{k-1} - \cdots - c_1 X - c_0$, then $u_n = \sum_{i=1}^r P_i(n) \alpha_i^n$ where
\begin{enumerate}
\item[1)] the real numbers $\alpha_1,\ldots,\alpha_r$ are the distinct roots of $\chi(x)$ with respective multiplicity $m_1,\ldots,m_r$;
\item[2)] for each $i\in\{1,\ldots,r\}$, $P_i$ is a polynomial of degree (strictly) less than $m_i$ whose coefficients are determined by the initial conditions satisfied by $\seq{u}{n}{0}$.
\end{enumerate}
See the general reference~\cite[Chapter~7]{Graham-Knuth-Patashnik-1994}.

\subsection{Dirichlet series, abscissa of convergence, and restrictions}
\label{sec:abscissa}
Given a sequence $\seq{s}{n}{1}$ of real numbers, its associated 
\emph{Dirichlet (generating)  series} is defined by
\[
F_s(z) = \sum_{n\ge 1} \frac{s_n}{n^z}, \ 
\text{where $z \in {\mathbb C}$}.
\]
It is well known that either there exists a real number $r_0$ such 
that $F_s(z)$ converges for all $z$ with $\mathfrak{Re}(z)>r_0$ 
and diverges for all $z$ with $\mathfrak{Re}(z)<r_0$, 
or $F_s(z)$ converges everywhere, or else $F_s(z)$ converges 
nowhere. The real number $r_0$ is called the \emph{abscissa of 
convergence} of the series $F_s(z)$. If $F_s(z)$ does not converge 
anywhere (resp., converges everywhere), we set $r_0 = +\infty$ 
(resp., $r_0=-\infty$).
The \emph{abscissa of absolute convergence} of the series $F_s(z)$ is defined in a similar way when absolute convergence of $F_s(z)$ is considered instead.

\begin{example}
    The most famous example of Dirichlet series is when all 
    coefficients $s_n = 1$ for all $n\ge 1$: the series is the {\em Riemann zeta 
    function}. Its abscissas of convergence and of absolute
    convergence are both equal to $+1$. Another well-studied
    example is when $s_n = (-1)^n$ for all $n\ge 1$: the abscissa of convergence
    of the Dirichlet series $\sum_{n\ge 1} (- 1)^n / n^z$ is equal to 
    $0$, while its abscissa of absolute convergence is $+1$ 
    (this is well known, see, e.g., \cite[p.~10]{Mandelbrojt}).
\end{example}


In what follows, we consider Dirichlet series in which the 
summation range is restricted or limited to some integers 
sharing a similar property. 
Let $b\ge 2$ be an integer and let $L$ be a language over the alphabet $[0,b-1]$, i.e., a set $L$ of words 
on $[0,b-1]$. We define the \emph{restricted} 
Dirichlet series $F_L(z)$ by
 \[
F_L(z) = \sum_{\substack{n \geq 1 \\ \rep_b(n) \in L}} \frac{1}{n^z},
\]
i.e., the series $F_s(z)$ where, for all $n\ge 1$, the coefficient $s_n$ is 
$1$ or $0$ depending on whether $\rep_b(n)$ belongs to $L$ or not.
In the sequel, the sequence $(s_n)_{n\ge 0}$ is called the \emph{characteristic sequence} of the language $L$ in base $b$.

\medskip

At the beginning of the 20th century, Kempner~\cite{Kempner1914} considered the set of positive integers whose base-$10$ representation contains no $9$.

\begin{theorem}[{Kempner, \cite{Kempner1914}}]\label{th:Kempner}
Consider the language $L_K=[0,8]^*$ over the alphabet $[0,9]$.
Then the restricted harmonic series $F_{L_K}(1)$ converges. 
\end{theorem}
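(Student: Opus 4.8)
The plan is to bound the restricted harmonic sum by grouping the integers according to the number of digits in their base-$10$ representation and counting how many admissible integers fall in each group. Concretely, I would partition the summation range by length: let $N_k$ denote the number of positive integers $n$ with $\rep_b(n) \in L_K$ and $|\rep_b(n)| = k$, i.e., integers with exactly $k$ base-$10$ digits, none equal to $9$. Since the leading digit must be nonzero and cannot be $9$ (so it lies in $[1,8]$, giving $8$ choices) while each of the remaining $k-1$ digits lies in $[0,8]$ (giving $9$ choices each), we have $N_k = 8 \cdot 9^{k-1}$.

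The key estimate is that every such $k$-digit integer satisfies $n \ge 10^{k-1}$, hence $1/n \le 10^{-(k-1)}$. Therefore the total contribution of the length-$k$ block to $F_{L_K}(1)$ is at most $N_k \cdot 10^{-(k-1)} = 8 \cdot 9^{k-1} / 10^{k-1} = 8 \cdot (9/10)^{k-1}$. Summing over all lengths $k \ge 1$ then gives
\[
F_{L_K}(1) = \sum_{k \ge 1} \ \sum_{\substack{n : |\rep_b(n)| = k \\ \rep_b(n) \in L_K}} \frac{1}{n} \le \sum_{k \ge 1} 8 \left( \frac{9}{10} \right)^{k-1} = 8 \cdot \frac{1}{1 - 9/10} = 80.
\]
Since this is a convergent geometric series (the ratio $9/10$ is strictly less than $1$), the series $F_{L_K}(1)$ converges, with sum bounded above by $80$.

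The heart of the argument is the interplay between the combinatorial growth rate of admissible words and the arithmetic decay of $1/n$: the count of valid $k$-digit integers grows like $9^{k}$, whereas the smallest such integer is of order $10^{k}$, so the per-block contribution decays geometrically. There is no real obstacle here once the counting is done correctly; the only subtlety to watch is handling the leading digit separately (it contributes a factor $8$ rather than $9$, and must be nonzero so that $\rep_b(n)$ is a genuine representation rather than a word with a leading zero), but this does not affect convergence. I would remark in passing that the same block-counting scheme, with the ratio $\mathrm{(number\ of\ admissible\ digits)}/b$ replacing $9/10$, immediately generalizes to any missing-digit language and underlies the more refined abscissa-of-convergence computations carried out later via \cref{lem:Titchmarsh}.
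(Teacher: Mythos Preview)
Your argument is correct and is in fact the classical proof going back to Kempner himself. Note, however, that the paper does not give its own proof of this theorem: it is stated as a known result with a citation to \cite{Kempner1914}, followed only by a remark about why the convergence is plausible. So there is no ``paper's proof'' to compare against; your write-up simply supplies the standard digit-count-versus-geometric-decay argument that the paper takes for granted, and it does so cleanly. The closing remark connecting the block-counting scheme to the later use of \cref{lem:Titchmarsh} is apt and matches the spirit of the paper's subsequent development.
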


\begin{remark}
This result has now become classical, but it might seem strange since the
``whole'' series $\sum_{n\ge 1} 1/n$ diverges. Actually it is not 
hard to get convinced that ``many'' integers are omitted in the
restricted series (think, for example, of all the integers belonging to the 
interval $[9 \cdot (10)^k, (10)^{k+1}-1]$ for a fixed $k$). 
\end{remark}

Of course Kempner's result implies that the abscissa of convergence 
of the restricted Dirichlet series $F_{L_K}(z)$ is less than or equal 
to $1$. It is then natural to try to determine the (exact) abscissa 
of convergence of this series and similar ones.

\section{A shorter proof of Nathanson's result using K\"ohler and Spilker's method}
\label{sec:short-proof}

In their 2009 paper, K\"ohler and Spilker studied the abscissa of 
convergence of some restricted Dirichlet series.
For this particular restriction, they considered base-$b$ representations that 
avoid some specific set of digits.

\begin{theorem}[{K\"ohler and Spilker, \cite[Satz 2]{Kohler-Spilker-2009}}]
\label{thm:German-guys}
Let $b\ge 2$ be an integer.
Consider a non-empty subset $D$ of $[0,b-1]$ with $D\neq \{0\}$.
Then the abscissa of convergence of the restricted Dirichlet series $F_{D^*}(z)$ is equal to $\frac{\log |D|}{\log b}$.
\end{theorem}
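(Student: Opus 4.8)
The plan is to determine the abscissa of convergence of $F_{D^*}(z)$ by estimating the summatory function of the characteristic sequence and then invoking the standard result (stated earlier as \cref{lem:Titchmarsh}) relating the abscissa of convergence to the growth rate of partial sums. Let me sketch this.

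First I would count, for each length $m$, the number of integers $n$ with $\rep_b(n) \in D^*$ and $|\rep_b(n)| = m$. A word in $D^* \cap [0,b-1]^m$ must have all of its $m$ digits in $D$, but to represent an integer (and to avoid leading zeros) the leading digit must lie in $D \setminus \{0\}$. Hence the number of such integers of length exactly $m$ is $(|D|-1)\,|D|^{m-1}$ when $0 \in D$, and $|D|^m$ when $0 \notin D$; in either case this is $\Theta(|D|^m)$, using the hypotheses $D \neq \{0\}$ and $D \neq \emptyset$ to guarantee the leading-digit count is nonzero. All such integers lie in the range $[b^{m-1}, b^m)$, so each contributes a summand $1/n^z$ with $n$ of size $\Theta(b^m)$.

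Next I would translate this count into an estimate for the summatory function $A(N) = \sum_{i \le N,\ \rep_b(i) \in D^*} 1$ of the characteristic sequence. Summing the per-length counts gives $A(b^m) = \Theta\!\bigl(\sum_{j \le m} |D|^j\bigr) = \Theta(|D|^m)$, since $|D| \ge 1$ and the geometric sum is dominated by its last term (the edge case $|D|=1$, i.e.\ a single nonzero digit, gives $A(b^m)=\Theta(m)$ and is handled separately, yielding abscissa $0 = \log 1 / \log b$, consistent with the formula). Setting $N = b^m$, we get $A(N) = \Theta(N^{\alpha})$ where $\alpha = \log_b |D| = \frac{\log |D|}{\log b}$, and interpolating over intermediate $N$ preserves this polynomial growth order up to constants.

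Finally I would apply \cref{lem:Titchmarsh}, the classical criterion that for a nonnegative coefficient sequence the abscissa of convergence equals $\limsup_{N\to\infty} \frac{\log A(N)}{\log N}$. From $A(N) = \Theta(N^{\alpha})$ we read off $\frac{\log A(N)}{\log N} \to \alpha$, giving abscissa exactly $\frac{\log |D|}{\log b}$. The main obstacle I anticipate is not any single step but rather the careful bookkeeping of the leading-digit constraint: one must handle the leading zero correctly so that the count reflects genuine base-$b$ representations of integers, and one must treat the degenerate case $|D|=1$ so the $\Theta$-estimate and the logarithmic formula remain valid. Since every bound here is of matching order $\Theta(|D|^m)$, the $\limsup$ is in fact a genuine limit, so the abscissa is pinned down exactly rather than merely bounded.
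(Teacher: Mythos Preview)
Your proposal is correct and follows essentially the same route the paper attributes to K\"ohler and Spilker: count the base-$b$ representations lying in $D^*$ of each length, deduce that the summatory function $A(N)$ grows like $N^{\log_b |D|}$, and read off the abscissa via \cref{lem:Titchmarsh}. The paper does not give a separate proof of \cref{thm:German-guys} (it is quoted from \cite{Kohler-Spilker-2009}), but it carries out exactly this strategy in detail for the more general \cref{thm:Nathanson}, of which \cref{thm:German-guys} is the special case $D_i = [0,b-1]\setminus D$ for all $i$; your argument is the natural specialization of that proof, and your interpolation step is what \cref{le:useful} packages abstractly.
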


In particular, when the set $D$ in~\cref{thm:German-guys} contains all but one element, we obtain a refinement of~\cref{th:Kempner}.
\begin{corollary}
\label{cor:avoid only one letter}
Let $b\ge 2$ be an integer.
Consider the set $D=[0,b-1]\setminus\{a\}$ where $a\in [1,b-1]$.
Then the abscissa of convergence of the restricted Dirichlet series $F_{D^*}(z)$ is equal to $\log_b (b-1)$.
\end{corollary}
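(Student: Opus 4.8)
The plan is to derive \cref{cor:avoid only one letter} as a direct specialization of \cref{thm:German-guys}. I first observe that the set $D = [0,b-1]\setminus\{a\}$ with $a\in[1,b-1]$ is a subset of $[0,b-1]$ that meets the two hypotheses of \cref{thm:German-guys}: since $b\ge 2$, the removed letter $a$ is a single element in $[1,b-1]$, so $D$ retains at least one element and is non-empty; moreover $D$ still contains the digit $0$ (because we only removed a nonzero digit $a$), hence $D\neq\{0\}$. Thus $D$ is an admissible digit set for the theorem.

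Next I would simply compute the cardinality of $D$. Removing exactly one letter from the $b$-element alphabet $[0,b-1]$ leaves $|D| = b-1$ letters. Substituting this value into the formula supplied by \cref{thm:German-guys}, the abscissa of convergence of $F_{D^*}(z)$ equals
\[
\frac{\log|D|}{\log b} = \frac{\log(b-1)}{\log b} = \log_b(b-1),
\]
which is exactly the claimed value. This completes the argument.

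There is essentially no obstacle here: the entire content of the corollary is the arithmetic identification $|D| = b-1$ together with the change-of-base identity $\frac{\log(b-1)}{\log b} = \log_b(b-1)$. The only point requiring even a moment's care is the verification of the hypotheses of \cref{thm:German-guys}, namely that $D$ is non-empty and differs from $\{0\}$; both follow immediately from the constraint $a\in[1,b-1]$, which guarantees that $0\in D$ and that $D$ has $b-1\ge 1$ elements. I would state the proof in two or three lines, emphasizing that the refinement of \cref{th:Kempner} corresponds to the base $b=10$ with $a=9$, so that $D=[0,8]$ and the abscissa equals $\log_{10} 9$.
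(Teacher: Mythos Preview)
Your approach is exactly the paper's: the corollary is stated there as an immediate specialization of \cref{thm:German-guys} with no separate argument, and your substitution $|D|=b-1$ is all that is intended.

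One small slip worth fixing: the sentence ``$D$ still contains the digit $0$ \dots\ hence $D\neq\{0\}$'' is a non sequitur, since containing $0$ is perfectly compatible with being equal to $\{0\}$. The correct justification is that $|D|=b-1$, so $D\neq\{0\}$ as soon as $b\ge 3$. In the boundary case $b=2$, $a=1$, one gets $D=\{0\}$, the hypothesis of \cref{thm:German-guys} fails, the restricted series $F_{D^*}(z)$ is empty (no $n\ge 1$ has $\rep_2(n)\in 0^*$), and the claimed abscissa $\log_2 1=0$ does not match the actual value $-\infty$. This edge case is a wrinkle in the corollary's statement rather than in your proof strategy, but your verification should not paper over it.
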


In 2021, Nathanson independently considered a different kind of restriction on 
Dirichlet series~\cite{Nathanson2021}.
For an integer base $b\ge 2$, the digits in a given position in the 
base-$b$ representations are constrained according to certain rules.
Depending on the position, the rules may vary; this is a fundamental 
difference between  K\"ohler and Spilker's work and Nathanson's.

\begin{theorem}[{Nathanson, \cite[Theorem 1]{Nathanson2021}}]
\label{thm:Nathanson}
Let $b\ge 2$ be an integer. For all $i\ge 0$, consider a proper 
subset $D_i$ of $[0,b-1]$ and let $\mathcal{D}$ denote the 
sequence $\seq{D}{i}{0}$. 
Define the language
\[
L_{\rm forb(\mathcal{D})}
=
\{w_n w_{n-1}\cdots w_0 : \ w_n \neq 0 \text{ and } w_i \notin D_i, \; 
\forall \, i\in[0,n]
\}.
\]
Suppose furthermore that the set 
${\cal M} = \{m \geq 1 : \ D_{m-1} \neq [1, b-1]\}$\
is infinite and that there exist non-negative real numbers 
$\alpha_0, \alpha_1, \dots, \alpha_{b-1}$ such that for all 
$m \geq 1$ and for all $\ell \in [0, b-1]$, the limit
\[
\lim_{m \to +\infty}\frac{1}{m} | \{i \in [0, m-1] \ : \ |D_i| = \ell \}|
\]
exists and is equal to $\alpha_\ell$.
Then the restricted Dirichlet series $F_{L_{\rm forb(\mathcal{D})}}(z)$
has abscissa of convergence
\[
\Theta_{\cal D} =\frac{1}{\log b} \sum_{\ell=0}^{b-1} \alpha_\ell \log (b - \ell).
\]
\end{theorem}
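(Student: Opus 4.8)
The plan is to reduce the whole problem to a count of admissible words and then apply the classical Titchmarsh-type formula recalled in \cref{lem:Titchmarsh}. The characteristic sequence $\seq{s}{n}{1}$ of $L_{\rm forb(\mathcal{D})}$ takes only the values $0$ and $1$, hence is nonnegative; moreover, since each $D_i$ is proper there is always at least one admissible digit in each position, so the hypothesis that $\mathcal{M}$ is infinite guarantees that $L_{\rm forb(\mathcal{D})}$ is infinite and thus that $\sum_n s_n = +\infty$. In this divergent regime I would invoke \cref{lem:Titchmarsh} to identify the abscissa of convergence of $F_{L_{\rm forb(\mathcal{D})}}(z)$ with
\[
\limsup_{N \to \infty} \frac{\log A(N)}{\log N}, \qquad A(N) = |\{\,1 \le n \le N : \rep_b(n) \in L_{\rm forb(\mathcal{D})}\,\}|,
\]
so that everything reduces to the asymptotic estimation of $A(N)$.

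For the counting step, write $a_i = b - |D_i|$ for the number of digits allowed in position $i$ (those in $[0,b-1]\setminus D_i$), and let $a_i'$ be the number of allowed \emph{nonzero} digits, relevant only at a leading position. An integer whose representation has length $j+1$ lies in $[b^j, b^{j+1}-1]$, and the number of admissible such representations is exactly $N_j = a_j'\,T_j$, where $T_j = \prod_{i=0}^{j-1} a_i$. Since each $D_i$ is proper we have $a_i \ge 1$, so $(T_j)_j$ is non-decreasing and no degenerate logarithm occurs. Summing over the lengths gives $A(b^{n+1}-1) = \sum_{j=0}^{n} a_j'\,T_j$.

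The asymptotics of $T_n$ are controlled by the density hypothesis: grouping positions according to the value of $|D_i|$ yields
\[
\frac{1}{n}\log T_n = \sum_{\ell=0}^{b-1} \log(b-\ell)\cdot \frac{|\{i\in[0,n-1] : |D_i| = \ell\}|}{n} \;\longrightarrow\; S := \sum_{\ell=0}^{b-1}\alpha_\ell \log(b-\ell),
\]
a genuine limit. For the upper bound I would use $a_j' \le b$ together with the monotonicity of $(T_j)$ to obtain $A(b^{n+1}-1) \le b\,(n+1)\,T_n$; dividing $\log A(b^{n+1}-1)$ by $(n+1)\log b$ and letting $n \to \infty$ gives $\limsup \le S/\log b$. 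The matching lower bound is the delicate point, and I expect it to be the main obstacle: one cannot simply retain the top term $N_n$, because $a_n'$ may vanish (precisely when $D_n = [1,b-1]$, i.e.\ no nonzero digit is admissible at the leading position). This is exactly where the infinitude of $\mathcal{M}$ is indispensable: along the infinitely many positions $p$ with $a_p' \ge 1$ we have $A(b^{p+1}-1) \ge T_p$, and since $\frac1p\log T_p \to S$ along every subsequence, this forces $\limsup \ge S/\log b$.

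Finally I would remove the restriction to $N = b^{n+1}-1$: for arbitrary $N$ with $b^n \le N < b^{n+1}$ one has $n\log b \le \log N < (n+1)\log b$ and, by monotonicity of $A$, $A(b^{n}-1) \le A(N) \le A(b^{n+1}-1)$; these two-sided comparisons let the limsup computed along powers of $b$ propagate to all $N$. Hence $\limsup_{N} \log A(N)/\log N = S/\log b = \Theta_{\mathcal{D}}$, which by \cref{lem:Titchmarsh} is the sought abscissa of convergence. The fact that the per-length counts $N_j$ may vanish infinitely often is precisely why the result is naturally phrased with a limsup rather than a genuine limit.
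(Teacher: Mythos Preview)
Your proof is correct and follows essentially the same route as the paper: reduce via \cref{lem:Titchmarsh} to $\limsup \log A(N)/\log N$, bound $A(b^k)$ from above by $O(k)\prod_{i<k}(b-|D_i|)$, and for the lower bound exploit the infinitely many lengths in $\mathcal{M}$ at which a nonzero leading digit is available. Your lower-bound step is in fact slightly cleaner than the paper's, which performs a case split on $|D_{m_t-1}|$ to extract a factor $\tfrac12$; you observe directly that $a_p' \ge 1$ already yields $A(b^{p+1}-1) \ge N_p \ge T_p$, and since $\tfrac{1}{p}\log T_p \to S$ is a genuine limit this suffices.
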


Observe that~\cref{thm:Nathanson} clearly 
implies~\cref{thm:German-guys}. However, we 
give a shorter proof of~\cref{thm:Nathanson} using the proof and 
methods from~\cref{thm:German-guys}. As K\"ohler and Spilker did, we require 
\cite[Lemma 1]{Kohler-Spilker-2009}, which is (partially) recalled below and for which K\"ohler and Spilker give a 
standard reference, namely \cite[p.~292]{Titchmarsh-1939}.

\begin{lemma}
\label{lem:Titchmarsh}
Let $\seq{s}{n}{1}$ be a sequence.
Assume that the series 
$A(n)= \sum_{i=1}^n s_i$ of partial sums is positive and tends to infinity. Then the abscissa of 
convergence of the Dirichlet series $F_s(z)$ is
\[
\limsup_{n\to +\infty} \frac{\log A(n) }{\log n}.
\]
\end{lemma}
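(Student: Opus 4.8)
The plan is to establish the two inequalities $\sigma_c \le L$ and $\sigma_c \ge L$, where $\sigma_c$ denotes the abscissa of convergence of $F_s(z)$ and $L := \limsup_{n\to +\infty} (\log A(n))/(\log n)$; note that $L \ge 0$ since $A(n)$ is eventually positive and tends to $+\infty$. The central tool throughout is Abel summation (summation by parts), which converts the Dirichlet series into an expression involving the partial sums $A(n)$ via $s_n = A(n) - A(n-1)$ and $A(0) = 0$. Concretely, I would first record the identity
\[
\sum_{n=1}^{N} \frac{s_n}{n^z} = \frac{A(N)}{N^z} + z \sum_{n=1}^{N-1} A(n) \int_n^{n+1} t^{-z-1}\, dt,
\]
obtained by partial summation together with $n^{-z} - (n+1)^{-z} = z\int_n^{n+1} t^{-z-1}\,dt$.

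For the upper bound $\sigma_c \le L$, I would fix $z$ with $\sigma := \mathfrak{Re}(z) > L$ and choose $\eps > 0$ with $\sigma > L + \eps$. The definition of $\limsup$ then gives $A(n) = O(n^{L+\eps})$. Consequently the boundary term satisfies $|A(N)/N^z| \le C\, N^{L+\eps-\sigma} \to 0$, while the remaining series converges absolutely, since $\int_n^{n+1} |t^{-z-1}|\,dt \le n^{-\sigma-1}$ bounds it by the convergent series $\sum_{n} n^{L+\eps-\sigma-1}$. Hence $F_s(z)$ converges whenever $\mathfrak{Re}(z) > L$, so $\sigma_c \le L$.

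For the lower bound $\sigma_c \ge L$, I would argue by contraposition: it suffices to show that for each real $\sigma$ with $0 < \sigma < L$ the series diverges at $z = \sigma$, because convergence at a real point strictly below $L$ would, by the half-plane structure of the abscissa, contradict such divergence. Assuming convergence at such a $\sigma$, the partial sums $S_N = \sum_{n\le N} s_n n^{-\sigma}$ are bounded, and a second application of Abel summation (now writing $s_n = n^\sigma \cdot (s_n n^{-\sigma})$) together with $(n+1)^\sigma - n^\sigma = O(n^{\sigma-1})$ yields $A(N) = O(N^\sigma)$. This forces $\limsup_{n} (\log A(n))/(\log n) \le \sigma < L$, contradicting the definition of $L$.

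The main obstacle I anticipate is the bookkeeping in the two partial summations: keeping the boundary terms and the size of the differences $n^{-z}-(n+1)^{-z}$ and $(n+1)^\sigma - n^\sigma$ under control, and exploiting the $\limsup$ characterization in the correct direction in each half of the argument (an upper bound $A(n) = O(n^{L+\eps})$ for convergence, and the reverse implication $A(N) = O(N^\sigma) \Rightarrow L \le \sigma$ for divergence). The estimate $(n+1)^\sigma - n^\sigma = O(n^{\sigma-1})$ requires only $\sigma > 0$, which is available since we may restrict to $\sigma \in (0,L)$ when $L > 0$; the degenerate case $L = 0$ is handled directly, as the series at $z = 0$ is $\sum_{n\le N} s_n = A(N) \to +\infty$ and hence diverges, giving $\sigma_c \ge 0 = L$.
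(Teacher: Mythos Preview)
Your argument is correct and is precisely the classical Abel-summation proof of this standard fact. The paper does not actually prove \cref{lem:Titchmarsh}: it simply records it as a known result and refers to \cite[p.~292]{Titchmarsh-1939} via \cite[Lemma~1]{Kohler-Spilker-2009}, so you have supplied more than the paper does while following exactly the approach found in that reference.
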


\begin{proof}[of~\cref{thm:Nathanson}]
First of all, the hypothesis means that, for all $\ell\in[0,b-1]$, 
\begin{equation}
\label{eq: description size}    
    |\{i \in [0, m-1] : |D_i| = \ell\}|
= m \alpha_\ell + o_\ell(m),
\end{equation}
where
\begin{equation}
\label{eq: lim epsilon}
\lim_{m\to +\infty} \frac{o_\ell(m)}{m} = 0. 
\end{equation}

As already mentioned, our strategy to prove the statement is 
inspired by the proof of~\cref{thm:German-guys} and makes 
use of~\cref{lem:Titchmarsh}.
To that aim, we define the sequence $\seq{s}{n}{1}$ to be the characteristic sequence of the language $L_{\rm forb(\mathcal{D})}$ in base $b$
and the sequence $(A(n))_{n\ge 1}$ by $A(n) = |\{ m\in [1,n] : \rep_b(m) \in L_{\rm forb(\mathcal{D})}\}|$ for all $n\ge 1$.
Note that the $n$th partial sum of the sequence $\seq{s}{n}{1}$ is 
$A(n)$. Therefore, using the notation from~\cref{lem:Titchmarsh}, 
we have $F_s(z) = F_{L_{\rm forb(\mathcal{D})}}(z)$.
To prove that the abscissa of convergence of the restricted Dirichlet 
series $F_{L_{\rm forb(\mathcal{D})}}(z)$ is equal to $\Theta_{\cal D}$, it suffices, by~\cref{lem:Titchmarsh}, to prove that
\begin{equation}
\label{eq:limsup-theta}
    \limsup_{n \to +\infty} \frac{\log A(n)}{\log n} = \Theta_{\cal D}.
\end{equation}
To evaluate the quantity $A(n)$ for large enough $n$,
we define $k$ by $b^{k-1} \leq n <  b^k$, i.e., 
$k= \lfloor \frac{\log n}{\log b} \rfloor + 1$. 
Of course, we have 
\begin{equation}
\label{eq: inequalities for A}
A(b^{k-1}) \leq A(n) \leq A(b^k). 
\end{equation}
We divide the rest of the proof into two steps.
First, we prove that the $\limsup$ in~\eqref{eq:limsup-theta} is bounded above by $\Theta_{\cal D}$.
Second, we build up an increasing sequence of integers such that the corresponding $\limsup$ is bounded below by $\Theta_{\cal D}$.

\bigskip

\textbf{Step 1}:
First, we prove that the $\limsup$ in~\eqref{eq:limsup-theta} is at most $\Theta_{\cal D}$.
We note that $\rep_b(b^j)=10^j$ for all $j\ge 0$.
For $j\ge 1$, all integers in the interval $[b^{j-1},b^j-1]$ are represented by words of length $j$ over the alphabet $[0,b-1]$.
Hence $A(b^j-1)-A(b^{j-1}-1)$ can be bounded from above by the number of words of length $j$ 
on $[0,b-1]$ such that their $i$-th digits belong to $[0,b-1] \setminus D_i$, i.e.,
\[
A(b^j-1)-A(b^{j-1}-1) \leq \prod_{i=0}^{j-1} (b-|D_i|).
\]
Thus, if we set $A(0)=0$, we obtain 
\begin{align*}
A(b^k) 
&\leq A(b^k-1)+1 
\leq 1 + \sum_{j=1}^{k} (A(b^j-1)-A(b^{j-1}-1)) \\
&\leq 1 + \sum_{j=1}^{k} \prod_{i=0}^{j-1} (b-|D_i|) \leq 1 + k \prod_{i=0}^{k-1} (b-|D_i|).
\end{align*}
By~\eqref{eq: description size}, we obtain
\begin{equation}
\label{eq: maj on A(bk)}
A(b^k) 
\leq 1 + k \prod_{\ell=0}^{b-1} (b-\ell)^{\alpha_{\ell} k + o_{\ell}(k)}
\leq 1 + k b^{\sum_{\ell = 0}^{b-1} |o_{\ell}(k)|}
\prod_{\ell=0}^{b-1} (b-\ell)^{\alpha_{\ell} k}.
\end{equation}
Putting~\eqref{eq: inequalities for A} and~\eqref{eq: maj on A(bk)} together gives
\[
\log A(n)
\leq
1 + \log k + (k \log b) \left(\sum_{\ell = 0}^{b-1} \frac{|o_{\ell}(k)|}{k} 
+ \frac{1}{\log b} \sum_{\ell=0}^{b-1} \alpha_{\ell} 
\log(b-\ell)\right).
\]
Since $k = \lfloor \frac{\log n}{\log b}\rfloor + 1$ implies that 
$k \log b \sim \log n$ when $n$ goes to infinity, we get
\[
\limsup_{n \to +\infty} \frac{\log A(n)}{\log n} \leq 
\lim_{k \to \infty} \left( \sum_{\ell = 0}^{b-1} 
\frac{|o_{\ell}(k)|}{k}
+ \frac{1}{\log b} \sum_{\ell=0}^{b-1} \alpha_{\ell}\log (b-\ell) \right)
= \Theta_{\cal D},
\]
where the last equality follows using~\eqref{eq: lim epsilon}.
This ends the first step. \hfill $\blacksquare$

\bigskip

\textbf{Step 2}: Now we prove that there exists an increasing sequence 
$\seq{m}{t}{0}$ of integers such that 
$\limsup_{m_t \to +\infty} 
\frac{\log A(m_t)}{\log m_t} \geq \Theta_{\cal D}$.
In fact, we will use the sequence $\seq{m}{t}{0}$ consisting of the elements of
${\cal M}$ in increasing order (recall that ${\cal M}$ is defined as in~\cref{thm:Nathanson} above and that ${\cal M}$ is infinite).
In short, for all $t\ge 0$, $m_t$ is the $(t+1)$st element of ${\cal M}$.

\medskip

First, for each $t\ge 0$, we consider the set of integers in $[b^{m_t-1},b^{m_t}-1]$, whose base-$b$ representations have length (exactly) $m_t$ and are such that their $i$th digit is not in $D_i$, i.e., the set 
\[
\left\{n\ge 1 : n\in [b^{m_t-1},b^{m_t}-1] \text{ and } \rep_b(n) \in L_{\rm forb(\mathcal{D})} \right\}.
\]
We let $P_{m_t}$ denote the cardinality of this set.
Observe that $A(b^{m_t}) \geq P_{m_t}$, so in order to bound the $\limsup$ above, we will evaluate $P_{m_t}$ in the sequel.
Since base-$b$ representations do not start with $0$, we have
\begin{align}
\label{eq:description Pmt}
    P_{m_t} =
\begin{cases}
 \displaystyle \prod_{i=0}^{m_t-1} (b - |D_i|), 
 &\text{if $0 \in D_{m_t-1}$}; \\
 \displaystyle\left(\frac{b - |D_{m_t-1}| -1}{b - |D_{m_t-1}|}\right)
 \prod_{i=0}^{m_t-1} (b - |D_i|), &\text{if $0 \notin D_{m_t-1}$}.
\end{cases}
\end{align}
(Note that, if $0 \notin D_{m_t-1}$, then there are $b - |D_{m_t-1}|-1$ possible choices for the digit in position $m_t-1$, as leading zeroes are forbidden.)
Following \cite{Nathanson2021}, we distinguish two cases to evaluate 
$P_{m_t}$, depending on the cardinality $|D_{m_t - 1}|$.

    \smallskip

\noindent \textbf{Case 1}: If $0 \leq |D_{m_t - 1}| \leq b - 2$, then
    \[
    \frac{b - |D_{m_t-1}| -1}{b - |D_{m_t-1}|} \geq \frac{1}{2}, 
    \]
    and thus
    \[
    P_{m_t} \geq \frac{1}{2} \prod_{i=0}^{m_t-1} (b - |D_i|)
    = \frac{1}{2} 
    \prod_{\ell=0}^{b-1} (b - \ell)^{\alpha_\ell m_t + o_\ell(m_t)}.
    \]
    where we used~\eqref{eq: description size} in the last equality.

    \smallskip
    
\noindent \textbf{Case 2}: If $|D_{m_t - 1}| = b - 1$, then $0 \in D_{m_t-1}$ 
    (recall that $m_t \in {\cal M}$). 
Thus, using the first equality in~\cref{eq:description Pmt}, we have
    \[
    P_{m_t} = \prod_{i=0}^{m_t-1} (b - |D_i|) =  
    \prod_{\ell=0}^{b-1} (b - \ell)^{\alpha_\ell m_t + o_\ell(m_t)}
    \]
by using~\eqref{eq: description size} again.

\smallskip

\noindent Hence, in both cases, we obtain
\[
P_{m_t} \geq \frac{1}{2} 
\prod_{\ell=0}^{b-1} (b - \ell)^{\alpha_\ell m_t + o_\ell(m_t)}
\geq \frac{1}{2} \ b^{-\sum_{\ell=0}^{b-1} |o_\ell(m_t)|} 
\prod_{\ell=0}^{b-1} (b-\ell)^{\alpha_\ell m_t},
\]
since (distinguishing the cases $o_\ell(m_t)\ge 0$ and $o_\ell(m_t)<0$)
\[
o_\ell(m_t) \log(b-\ell) \ge -|o_\ell(m_t)| \log b.
\]
This in turn yields
\[
\log A(b^{m_t}) \geq \log P_{m_t} \geq - \log 2 - \log b 
\sum_{\ell=0}^{b-1} |o_\ell(m_t)| +
m_t \sum_{\ell=0}^{b-1} \alpha_\ell \log (b - \ell),
\]
 and thus
 \[
\frac{\log A(b^{m_t})}{\log b^{m_t}} \geq \frac{\log P_{m_t}}{m_t \log b}
\geq - \frac{\log 2}{m_t \log b} - 
\sum_{\ell=0}^{b-1} \frac{|o_\ell(m_t)|}{m_t} +
\frac{1}{\log b} \sum_{\ell=0}^{b-1} \alpha_\ell \log (b - \ell).
\]
Finally, by using~\eqref{eq: lim epsilon}, we obtain
\[
\limsup_{m_t \to +\infty} \frac{\log A(b^{m_t})}{\log b^{m_t}} \geq
\frac{1}{\log b} \sum_{\ell=0}^{b-1} \alpha_\ell \log (b - \ell) = \Theta_{\cal D}.
\]
This finishes up the two steps of the proof. 
The proof is now complete.
\qed
\end{proof}

\section{Opening the door to a myriad of cases}\label{sec:myriad}

In this section, we illustrate with various practical examples how K\"ohler and Spilker's strategy using 
\cref{lem:Titchmarsh} may open the way to generalizations 
of~\cref{thm:German-guys,thm:Nathanson} (note that none of these results may be applied to the cases considered in~\cref{sec:myriad}).
We focus on words that periodically avoid certain blocks and the key is to \emph{count} them. Counting words 
having a prescribed structure is notably done in several papers of 
Janji\'c 
\cite{Janjic2015,Janjic2016,Janjic2017,Janjic2018-2,Janjic2018-1}.
An important paper on the subject is, of course, the 1999
Noonan and Zeilberger paper \cite{Noonan-Zeilberger-1999}, that 
describes and implements the ``Goulden-Jackson cluster method''.

\medskip

We start with a lemma that will prove useful.

\begin{lemma}\label{le:useful}
Let $\seq{s}{n}{1}$ be a sequence of non-negative numbers 
and let $(A(n))_{n \geq 1}$ be its summatory function, i.e., $A(n) = \sum_{i=1 \leq i \leq n} s_i$. 
Suppose that there exist an integer $b \ge 2$ and a real 
$\lambda > 1$ such that $A(b^k) = \lambda^{k(1+o(1))}$ 
when $k$ goes to infinity. 
Then, $\frac{\log A(n)}{\log n} \sim \frac{\log \lambda}{\log b}$
when $n$ goes to infinity.
\end{lemma}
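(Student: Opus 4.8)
The plan is to reduce the statement for general $n$ to the hypothesis about the subsequence $(b^k)_{k\ge 0}$ by a monotonicity sandwich, exactly in the spirit of Step~1 in the proof of \cref{thm:Nathanson}. First I would record that, since every $s_n$ is non-negative, the summatory function $(A(n))_{n\ge 1}$ is non-decreasing. Moreover, because $\lambda>1$, taking logarithms in the hypothesis gives $\log A(b^k)=k(1+o(1))\log\lambda\to+\infty$, so $A(b^k)\to+\infty$; combined with monotonicity this guarantees that $A(n)$ is positive for all large $n$, hence $\log A(n)$ is well-defined and positive where needed.

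Next, given a large $n$, I would fix $k$ with $b^{k-1}\le n<b^k$, that is, $k=\lfloor \log n/\log b\rfloor+1$, so that $k\to+\infty$ as $n\to+\infty$ and $\log n\sim k\log b$. Monotonicity of $A$ then yields the two-sided bound $A(b^{k-1})\le A(n)\le A(b^k)$, and, for $n$ large enough, taking logarithms preserves these inequalities.

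The core computation is to feed the hypothesis into both endpoints: $\log A(b^k)=k(1+o(1))\log\lambda$ and likewise $\log A(b^{k-1})=(k-1)(1+o(1))\log\lambda=k(1+o(1))\log\lambda$, the index shift being harmless since $k-1\to+\infty$ as well. Dividing the sandwiched inequalities through by $\log n$ and using $\log n\sim k\log b$, both the upper and lower bounds on $\log A(n)/\log n$ tend to $\frac{\log\lambda}{\log b}$. By the squeeze, $\log A(n)/\log n\to\frac{\log\lambda}{\log b}$, which is a positive constant since $\lambda>1$ and $b\ge 2$; this is precisely the claimed asymptotic equivalence.

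I do not expect a serious obstacle: the argument is a routine monotone interpolation between consecutive powers of $b$. The only points requiring a little care are verifying that the logarithms are legitimate (positivity of $A(n)$ for large $n$, which follows from $\lambda>1$) and checking that the $o(1)$ error term behaves well under the index shift $k\mapsto k-1$, which it does because both indices diverge and refer to the same underlying sequence.
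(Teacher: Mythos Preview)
Your proof is correct and follows essentially the same approach as the paper: sandwich $A(n)$ between $A(b^{k-1})$ and $A(b^k)$ using monotonicity, apply the hypothesis to both endpoints so that their logarithms behave like $k\log\lambda$, divide by $\log n\sim k\log b$, and conclude by the squeeze theorem. You even add a small clarification the paper leaves implicit, namely that $A(n)$ is eventually positive so the logarithms are well-defined.
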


Note that the assumption in the previous lemma is equivalent to saying 
that $\log A(b^k) = k(1+o(1)) \log \lambda$, i.e., 
$\log A(b^k) \sim k \log \lambda$.

\begin{proof}[of~\cref{le:useful}]
    For an integer $n\geq 1$, let $k$ be such that $b^{k-1} \leq n < b^k$
    (i.e., $k-1 = \lfloor \frac{\log n}{\log b} \rfloor$). 
    We have $k \sim \frac{\log n}{\log b}$ when $n$ goes to infinity.
    Since $A(b^{k-1}) \leq A(n) \leq A(b^k)$, we have
    \[
    \frac{\log A(b^{k-1})}{\log n} \leq \frac{\log A(n)}{\log n} 
    \leq \frac{\log A(b^k)}{\log n}
    \]
    for $n\ge 2$.
    By assumption both $\log A(b^k)$ and $\log A(b^{k-1})$ behave like $k \log \lambda$, thus 
    \[\displaystyle \lim_{n\to +\infty} \frac{\log A(b^{k-1})}{\log n} =
    \frac{\log \lambda}{\log b} \ \ \text{and} \ \ 
    \displaystyle \lim_{n\to +\infty}\frac{\log A(b^k)}{\log n} =
    \frac{\log \lambda}{\log b} .
    \] 
    Hence, the squeeze theorem finally gives $\lim_{n\to +\infty} \frac{\log A(n)}{\log n} =
    \frac{\log \lambda}{\log b}$, as desired. 
    \qed
\end{proof}

\subsection{All-distinct-letter blocks}
\label{sec:disjoint blocks}

We play with a specific example in base $10$ where representations 
periodically avoid two blocks that do not share a letter.
More specifically, define $L_1$ to be the language of all 
the words $w_k w_{k-1} \cdots w_0$ over $[0, 9]$ such that $w_k \neq 0$
and, for all $i\in[0,k-1]$, 
\[
w_{i+1}w_i \neq 
\begin{cases} 
12, & \text{if $i$ is even};\\
89, & \text{if $i$ is odd};
\end{cases}
\]
Observe that $L_1$ contains base-$10$ representations that periodically avoid the blocks $12$ and $89$.
Note that the blocks $12$ and $89$ do not have a letter in common.
Also note that if words in $L_1$ contain the block $12$ (resp., $89$), then it must occur at an odd (resp., even) position; it can also be the case that they do not contain either block.

The goal of this section is to determine the abscissa of convergence 
of the restricted Dirichlet series $F_{L_1}(z)$.
To this aim, for all $n\ge 0$, we let $v_n$ denote the cardinality of the sub-language 
$L_1 \cap [0,9]^n$, i.e., $v_n$ gives the number of length-$n$ words in $L_1$. 
One can also compute the first few elements of the sequence 
$(v_n)_{n\ge 0}$ by hand and obtain $1, 9, 89, 881, 8721$.
Entering them in Sloane's On-Line Encyclopedia of Integer Sequences (OEIS), we find the related sequences \cite[A072256]{Sloane} and
\cite[A138288]{Sloane}.
In the description of the first, we read that the $n$th term gives 
the number of ``$01$-avoiding words of length $n$ over the alphabet $[0,9]$ which do not end in $0$''.
Quite clearly, by reversing the reading order, it also gives the number of length-$n$ words over $[0,9]$ not starting with $0$ and avoiding the pattern $10$.
If we denote the latter sequence by $\seq{x}{n}{0}$, Sloane's OEIS provides a recurrence relation given by $x_0 = 1$, $x_1=9$, and, for all $n\ge 0$,
\begin{equation}
\label{eq: recurrence for 2 non-overlapping blocks}
    x_{n+2}=10 x_{n+1} - x_{n}. 
\end{equation}

We now establish a bijection between the language $L_1$ and that containing words satisfying the condition described above.
We start off with the following result.

\begin{proposition}
\label{pro:bijection-blocks}
    Let $X_1$ be the set of words over the alphabet $[0,9]$ containing at least one of the blocks $12,89$ and such that every block $12$ (resp., $89$) necessarily occurs at even (resp., odd) positions.
    Let $X_2$ be the set of words over the alphabet $[0,9]$ containing the block $12$.
    Define the map $f: [0,9]^*\to [0,9]^*$ that sends a word to that obtained by replacing all occurrences of the blocks $89$ (resp., $12$) in odd positions by the block $12$ (resp., $89$).
    Then the map $f_1: X_1 \to X_2, u \mapsto f(u)$ is a bijection.
\end{proposition}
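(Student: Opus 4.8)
The plan is to prove that the given map $f$ is an \emph{involution} on all of $[0,9]^*$, and then to obtain the bijection $f_1$ as a formal consequence by checking that $f$ carries $X_1$ into $X_2$ and $X_2$ into $X_1$. The whole argument rests on the hypothesis emphasized just before the statement, namely that the two blocks $12$ and $89$ are made of \emph{pairwise distinct} letters.

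First I would record the structural fact that makes $f$ well defined. Since the four digits $1,2,8,9$ are distinct, no two occurrences of the blocks $12$ and $89$ can overlap: a block cannot overlap itself (its two letters differ, so $w_{i+1}w_i = w_{i+2}w_{i+1}$ being a block forces a contradiction), and $12$ and $89$ share no letter. Hence in any word the occurrences of $12$ and $89$ sit at a well-defined, pairwise-disjoint set of positions, and the simultaneous replacement defining $f$ is unambiguous. Next I would prove $f \circ f = \mathrm{id}$. The crucial observation is that replacing one block of $\{12,89\}$ by the other at a fixed position neither creates nor destroys any occurrence of $12$ or $89$ elsewhere: the newly introduced letters cannot combine with an unchanged neighbour to form a fresh block, again because $1,2,8,9$ are distinct (one checks the immediate left and right neighbour of the replaced pair). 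Therefore the set of block-positions is invariant under $f$, which merely toggles the block type $12 \leftrightarrow 89$ at each odd position while fixing even positions; applying it twice restores the original word. Thus $f$ is a bijection of $[0,9]^*$ onto itself with $f^{-1} = f$.

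Then I would verify the two inclusions. For $u \in X_1$, every $12$ sits at an even position (hence is fixed by $f$) and every $89$ at an odd position (hence is turned into $12$); since $u$ contains at least one block, $f(u)$ contains a $12$, so $f(u) \in X_2$. For the reverse inclusion $f(X_2) \subseteq X_1$, I would take $w \in X_2$ and track, position by position and according to parity, what each block of $w$ becomes under $f$, checking that in $f(w)$ every $12$ lands at an even position, every $89$ at an odd position, and at least one block survives. Finally, since $f$ is an involution with $f(X_1) \subseteq X_2$ and $f(X_2) \subseteq X_1$, the restrictions $f|_{X_1}\colon X_1 \to X_2$ and $f|_{X_2}\colon X_2 \to X_1$ are mutually inverse, which gives that $f_1$ is a bijection.

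I expect the main obstacle to be the reverse inclusion $f(X_2) \subseteq X_1$ in the third step. Controlling the parities of the blocks of $f(w)$ for an arbitrary $w \in X_2$ is the delicate point, because $f$ acts only on odd-position blocks and so cannot, by itself, move an even-position $89$; matching the surviving blocks of $f(w)$ against the positional constraints defining $X_1$ is therefore where the precise position convention and the exact descriptions of $X_1$ and $X_2$ must be invoked with care, and it is the step most likely to require the finer bookkeeping of how $12$'s and $89$'s are distributed across even and odd positions.
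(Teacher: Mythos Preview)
Your approach is essentially the paper's: it too introduces the candidate inverse $f_2 \colon X_2 \to X_1$, $v \mapsto f(v)$, asserts that $f_2(X_2) \subseteq X_1$ ``follows the same lines'' as the forward inclusion, and concludes from $f_1 \circ f_2 = \mathrm{id}$ and $f_2 \circ f_1 = \mathrm{id}$. You are more careful than the paper in flagging the reverse inclusion $f(X_2) \subseteq X_1$ as the delicate point.

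Your instinct there is correct, but for a stronger reason than you anticipate: that inclusion is not merely delicate, it is \emph{false} as stated. Take $w = w_3 w_2 w_1 w_0 = 8912$. The block $12$ occurs at position $0$, so $w \in X_2$. Both blocks of $w$ sit at even positions ($12$ at position $0$, $89$ at position $2$), so $f$ acts trivially and $f(w) = 8912$. But $8912$ has an $89$ at the even position $2$, violating the requirement in $X_1$ that every $89$ occur at an odd position; hence $f(w) \notin X_1$. A direct count confirms the mismatch: $|X_1 \cap [0,9]^4| = 297$ while $|X_2 \cap [0,9]^4| = 299$ (the two extra words being $8912$ and $1289$), so no length-preserving bijection $X_1 \to X_2$ exists and the proposition is false as written. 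The paper's one-line proof glosses over exactly the step you singled out. What is actually needed for the subsequent corollary $v_n = x_n$ is the set of words having \emph{some} $12$ at an even position or \emph{some} $89$ at an odd position (i.e., the complement, leading zeros allowed, of the language $L_1$); with that set in place of $X_1$, your involution argument goes through verbatim.
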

\begin{proof}
    First, note that for all $u\in X_1$ we have $f_1(u)\in X_2$.
    Indeed, if $u$ contains a block $12$ occurring at an even position, then $12$ is clearly a factor of $f_1(u)$, so $f_1(u)\in X_2$.
    Otherwise, $u$ necessarily contains a block $89$ at an odd position and similarly, $f_1(u)\in X_2$.

    Now consider the map $f_2: X_2 \to X_1, v \mapsto f(v)$.
    Proving that $f_2(X_2) \subseteq X_1$ follows the same lines as above.
    Now observe that we have $f_2(f_1(u))=u$ for all $u\in X_1$ and $f_1(f_2(v))=v$ for all $v\in X_2$.
    %
    %
    %
    \qed
\end{proof}

Using~\cref{pro:bijection-blocks}, complementing the languages, and getting rid of words starting with $0$, we link our sequence of interest and that of Sloane's.

\begin{corollary}
\label{cor:link-Ln-seq-OEIS-block10}
    For all $n\ge 0$, we have $v_n=x_n$.
\end{corollary}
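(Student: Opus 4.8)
The plan is to reduce the identity $v_n = x_n$ to a statement about words in which leading zeros are allowed, and then account separately for the leading-digit constraint. Write $\mathcal{A}$ for the language of words over $[0,9]$ (possibly with leading zeros) in which every block $12$ occurs at an odd position and every block $89$ at an even position --- equivalently, the defining condition of $L_1$ with the leading-digit requirement dropped --- and set $P_n = |\mathcal{A} \cap [0,9]^n|$. Likewise let $R_n$ be the number of length-$n$ words over $[0,9]$ (leading zeros allowed) that avoid the block $10$. First I would establish $P_n = R_n$ for all $n$, and then show that the passage from $P_n, R_n$ to $v_n, x_n$ --- removing the words that begin with $0$ --- subtracts the same quantity in both cases.

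For the first step, the key point is that the two parity-dependent forbidden blocks $12$ and $89$ can be turned into a single position-independent forbidden block by relabeling each position according to the parity of its index. Concretely, I would fix two permutations $\sigma, \tau$ of $[0,9]$ with $\sigma(2)=0$, $\sigma(8)=1$, $\tau(1)=1$, $\tau(9)=0$ (completed arbitrarily to bijections), and define $\Phi$ to act on a word $w_k \cdots w_0$ by replacing $w_i$ with $\sigma(w_i)$ when $i$ is even and with $\tau(w_i)$ when $i$ is odd. Since the recoding is applied letter by letter, a block $10$ occurs in $\Phi(w)$ at position $i$ if and only if the corresponding forbidden block ($12$ if $i$ is even, $89$ if $i$ is odd) occurs in $w$ at position $i$; hence $\Phi$ is a length-preserving bijection carrying $\mathcal{A} \cap [0,9]^n$ onto the $10$-avoiding words, so $P_n = R_n$. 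This is exactly the block-swapping mechanism behind \cref{pro:bijection-blocks}, now applied to the complementary (avoiding) languages; alternatively, one may invoke the map $f$ of \cref{pro:bijection-blocks} directly to match $\mathcal{A}$ with the $12$-avoiding words and then observe that avoiding $12$ and avoiding $10$ give the same count via a global relabeling. In particular $R_n$ satisfies the recurrence \eqref{eq: recurrence for 2 non-overlapping blocks} with $R_0 = 1$, $R_1 = 10$.

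It remains to discard the words beginning with $0$. A word of $\mathcal{A} \cap [0,9]^n$ whose leading digit is $0$ is obtained by prefixing $0$ to an arbitrary word of $\mathcal{A} \cap [0,9]^{n-1}$: the only new block sits at the top position and has first letter $0$, which can never equal $12$ or $89$. Hence there are exactly $P_{n-1}$ such words and $v_n = P_n - P_{n-1}$. Symmetrically, a $10$-avoiding word of length $n$ beginning with $0$ is $0$ followed by an arbitrary $10$-avoiding word of length $n-1$ (a leading $0$ cannot start the block $10$), so $x_n = R_n - R_{n-1}$. Combining these with $P_n = R_n$ yields $v_n = x_n$, after checking the base cases $n = 0, 1$ directly.

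The main obstacle is making the first step airtight: one must ensure that the parity-dependent recoding (or the block swap of \cref{pro:bijection-blocks}) neither destroys a genuine occurrence nor manufactures a spurious block straddling a boundary between two modified positions. This is exactly where it matters that $12$ and $89$ share no letter and that two blocks of the same parity are separated by at least two positions, so that the occurrences being altered are pairwise letter-disjoint and the recoding is a genuine bijection of $[0,9]^n$. The leading-zero bookkeeping is then routine precisely because both forbidden blocks, and the block $10$, begin with a nonzero digit, which makes the two corrections identical in form.
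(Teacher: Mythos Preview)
Your argument is correct and follows the same overall strategy as the paper: biject the leading-zero-allowed language $\mathcal{A}$ with a single-forbidden-block language, then strip off the words beginning with $0$. The paper's own justification is a one-line sketch invoking \cref{pro:bijection-blocks}, complementing, and removing leading zeros; you have filled in precisely those details.

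One small difference worth noting: instead of using the block-swap map $f$ of \cref{pro:bijection-blocks} on the \emph{containing} languages and then complementing, you construct a letter-by-letter permutation $\Phi$ (via the parity-dependent permutations $\sigma,\tau$) directly on the \emph{avoiding} languages. This is cleaner: because $\Phi$ is automatically a bijection of $[0,9]^n$, there is nothing to verify about overlapping or adjacent blocks, whereas the block-swap map of \cref{pro:bijection-blocks} relies on the disjoint-letter hypothesis you single out in your final paragraph. Your explicit computation $v_n = P_n - P_{n-1}$ and $x_n = R_n - R_{n-1}$ also makes the leading-zero step completely transparent, which the paper leaves implicit.
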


\cref{cor:link-Ln-seq-OEIS-block10} and the work in~\cite[\S 3]{Kohler-Spilker-2009} imply the next result.
Indeed, in that section, K\"ohler and Spilker considered the language of base-$b$  representations avoiding a fixed block of digits and determine the abscissa of convergence of the corresponding restricted Dirichlet series.

\begin{theorem}
\label{thm:abscissa-convergence-blocks-12-89}
    Consider the dominant root $\lambda=5+2 \sqrt{6}$ of the characteristic polynomial $P(x)=x^2-10x+1$ of the recurrence in~\eqref{eq: recurrence for 2 non-overlapping blocks}.
    Then the abscissa of convergence of the restricted Dirichlet series $F_{L_1}(z)$ is $\frac{\log\lambda}{\log 10}$.
\end{theorem}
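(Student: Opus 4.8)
The plan is to combine the bijective count from \cref{cor:link-Ln-seq-OEIS-block10} with the two abstract tools already established, namely \cref{lem:Titchmarsh} and \cref{le:useful}. Writing $A(n) = |\{m \in [1,n] : \rep_{10}(m) \in L_1\}|$ for the summatory function of the characteristic sequence of $L_1$ in base $10$, the sequence $(A(n))_{n\ge 1}$ is positive and tends to infinity, so by \cref{lem:Titchmarsh} the abscissa of convergence of $F_{L_1}(z)$ equals $\limsup_{n\to+\infty} \frac{\log A(n)}{\log n}$. It therefore suffices to control this $\limsup$, and \cref{le:useful} (with $b=10$) reduces the task to estimating $A(10^k)$ as $k\to+\infty$: concretely, it is enough to show that $A(10^k) = \lambda^{k(1+o(1))}$.

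First I would pin down $A(10^k)$ in terms of the block-counting sequence. Since every length-$j$ word of $L_1$ has a nonzero leading digit, it is exactly the base-$10$ representation of a unique integer in $[10^{j-1}, 10^j - 1]$; hence $A(10^k - 1) = \sum_{j=1}^k v_j$, and the two boundary corrections (the empty word, and the representation $1 0^k$ of $10^k$) only change $A(10^k)$ by $O(1)$. By \cref{cor:link-Ln-seq-OEIS-block10} we have $v_j = x_j$, and by \eqref{eq: recurrence for 2 non-overlapping blocks} the sequence $(x_j)_j$ satisfies the linear recurrence with characteristic polynomial $P(x) = x^2 - 10x + 1$, whose roots are $\lambda = 5 + 2\sqrt{6}$ and $\mu = 5 - 2\sqrt{6}$. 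Note that $\lambda \mu = 1$, so $0 < \mu < 1 < \lambda$.

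Next I would invoke the closed form for linear recurrences recalled in \cref{sec:notation and def}: there are constants $c, c'$ with $v_j = c\lambda^j + c'\mu^j$, where the initial data $x_0 = 1$, $x_1 = 9$ give $c = \frac{9 - \mu}{\lambda - \mu} = \frac{4 + 2\sqrt{6}}{4\sqrt{6}} > 0$. Because $0 < \mu < 1$, the contribution of $c'\mu^j$ stays bounded, and summing the geometric series yields
\[
\sum_{j=1}^k v_j = c\,\frac{\lambda^{k+1} - \lambda}{\lambda - 1} + O(1) = \frac{c\lambda}{\lambda - 1}\,\lambda^k + O(1),
\]
so that $A(10^k) = C\lambda^k + O(1)$ with $C = \frac{c\lambda}{\lambda - 1} > 0$. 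Taking logarithms gives $\log A(10^k) = k\log\lambda + O(1)$, that is, $A(10^k) = \lambda^{k(1+o(1))}$, which is exactly the hypothesis of \cref{le:useful}. That lemma then yields $\frac{\log A(n)}{\log n} \to \frac{\log \lambda}{\log 10}$, so the $\limsup$ above is in fact a genuine limit equal to $\frac{\log \lambda}{\log 10}$, and the conclusion follows from \cref{lem:Titchmarsh}.

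The computation is entirely routine once the pieces are in place; the only point requiring a little care is the verification that the coefficient $c$ of the dominant root is strictly positive, which is what guarantees that $A(10^k)$ really grows at the rate $\lambda^k$ and not at the slower rate governed by $\mu$. Everything else, namely the off-by-one boundary terms and the geometric summation, is absorbed into the $O(1)$ error and is invisible at the logarithmic scale relevant to \cref{le:useful}.
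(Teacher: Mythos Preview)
Your proof is correct. The route differs slightly from the paper's: the paper observes that, once \cref{cor:link-Ln-seq-OEIS-block10} identifies $v_n$ with the count of base-$10$ representations avoiding a single fixed block (namely $10$), the result follows directly from the work of K\"ohler and Spilker in \cite[\S 3]{Kohler-Spilker-2009}, where the abscissa of convergence for block-avoiding Dirichlet series is already determined. You instead give a self-contained argument: solve the linear recurrence explicitly, check that the coefficient of the dominant root $\lambda$ is positive, sum to obtain $A(10^k)=C\lambda^k+O(1)$, and finish with \cref{le:useful} and \cref{lem:Titchmarsh}. This is precisely the template the paper itself uses for \cref{thm:abscissa-convergence-blocks-12-21}, so your approach is entirely in the spirit of the paper and has the advantage of not invoking an external result; the paper's shortcut, on the other hand, makes explicit why the bijection of \cref{pro:bijection-blocks} is the key step.
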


An alternative approach to computing the number $v_n$ of length-$n$ words in $L_1$ uses the Goulden-Jackson cluster method; see, for example,
\cite{Noonan-Zeilberger-1999}.   This general technique provides an algorithm to determine the generating function for the number of length-$n$ words over a finite alphabet that avoid some given finite set of patterns.   At first glance $L_1$ is not defined in terms of a finite list of
avoided patterns (since it depends on the parity of the position), but we can nevertheless resort to the following trick:  expand the alphabet from $\{0,1, \ldots, 9 \}$ to
an alphabet of twice the size, containing both primed and unprimed digits.   Define $d_n$ to be the number of length-$n$ words over this larger alphabet that avoid $ab$ and $a'b'$ for $a,b\in[0,9]$ and also $1'2$ and $89'$.
Then the Goulden-Jackson cluster method, as implemented in Maple by the {\tt DAVID\_IAN} package, demonstrates that  $\sum_{n \geq 0} d_n x^n = (1 +10x-x^2)/(1-10x+x^2)$. 
Here $d_n$ counts the number of words that avoid 12 at even positions and 89 at odd positions (if the last digit is primed), plus the number of words that avoid 12 at odd positions and 89 at even positions (if the last digit is not primed). The two summands are clearly in bijection with each other, and it now follows that $(d_{n+1}-d_n)/2$ is equal to $v_{n+1}$.

\subsection{Blocks sharing letters}
\label{sec:blocks sharing letters}

We now turn to the case where base-$10$ representations  periodically avoid two blocks sharing common letters. It is a little 
trickier to handle and is \emph{a priori} neither a consequence of K\"ohler and 
Spilker's results~\cite{Kohler-Spilker-2009} nor 
Nathanson's~\cite{Nathanson2021}.
As in the previous section, we define $L_2$ to be the language of all words $w_k w_{k-1} \cdots w_0$ over $[0,9]$ such that $w_k \neq 0$ and, for all $i\in[0,k-1]$, 
\[
w_{i+1}w_i \neq 
\begin{cases} 
12, & \text{if $i$ is even};\\
21, & \text{if $i$ is odd};
\end{cases}
\]
and also define $L_2'$ to be the same language without the condition $w_k\neq 0$.
Observe that $L_2$ contains base-$10$ representations that periodically avoid the blocks $12$ and $21$.

Similarly, we want to determine the abscissa of convergence of the restricted Dirichlet series $F_{L_2}(z)$.
So, for all $n\ge 0$, we let $v_n$ denote the cardinality of the sub-language $L_2  \cap [0,9]^n$ and, for all $n\ge 1$, we also define the sets
\begin{align*}
    S_n &= \{ w_{n-1}\cdots w_0 \in L_2 : w_{n-1} \neq 2 \text{ if $n$ is even and } w_{n-1} \neq 1 \text{ otherwise}\}, \\
    P_n &= \{ w_{n-1}\cdots w_0 \in L_2 : w_{n-1} = 2 \text{ if $n$ is even and } w_{n-1} = 1 \text{ otherwise}\}, \\
    Q_n &= \{ w_{n-1}\cdots w_0 \in L_2' : w_{n-1} = 0\}.
\end{align*}
We have $|S_1| = 8$, $|P_1|=1=|Q_1|$, and 
\begin{align*}
    |S_{n+1}| &= 8 (|S_n| + |P_n| + |Q_n|), \\
    |P_{n+1}| &= |S_n| + |Q_n|,\\
    |Q_{n+1}| &= |S_n| + |P_n| + |Q_n|,
\end{align*}
for all $n \ge 1$.
For example, let us explain the first equality.
To build up a word in $S_{n+1}$, we may prepend a letter $a\in[0,9]$ to a word $w$ of length $n$, according to one of the next three cases: if $w\in S_n$ (resp., $P_n$; resp., $Q_n$), then $a\in[0,9]$ except $0$, and either $1$ or $2$, depending on the parity; this gives $8$ possibilities.

As we have $(v_n)_{n\ge 1}=(|S_n|+|P_n|)_{n\ge 1}$, the first few values of $(v_n)_{n\ge 0}$ are
\[
1, 9, 89, 882, 8739, 86589, 857952, 8500869, 84229389, 834572322. 
\]
In contrast with the previous section, this sequence does not \textit{directly} belong to the OEIS.
A fortunate error made by the authors nevertheless led them to the sequence~\cite[A322054]{Sloane}.
For all $n\ge 0$, its $n$th term gives the number of ``decimal strings of length $n$ that do not contain a specific string $aa$ (where $a$ is a fixed single digit)''.
If we denote the latter by $\seq{x}{n}{0}$, the OEIS provides us with a recurrence relation given by $x_0 = 1$, $x_1 = 10$, and 
\begin{equation}
\label{eq: recurrence for 2 overlapping blocks}
    x_{n+2}=9 (x_{n+1} + x_{n})
\end{equation}
for all $n\ge 0$. 
Its first few terms are
\[
1, 10, 99, 981, 9720, 96309, 954261, 9455130, 93684519.
\]
We will establish that the sequence $(v_n)_{n\ge 0}$ can be obtained as the first difference of the sequence $\seq{x}{n}{0}$.
We start with a result analogous to~\cref{pro:bijection-blocks}.

\begin{proposition}
\label{pro:bijection-blocks-overlapping}
    Let $Y_1$ be the set of words over the alphabet $[0,9]$ containing at least one of the 
 blocks $12,21$ and such that every block $12$ (resp., $21$) necessarily occurs at even (resp., odd) positions.
    Let $Y_2$ be the set of words over the alphabet $[0,9]$ containing the block $aa$, for a fixed letter $a\in[0,9]$.
    Define the map $f_a: [0,9]^*\to [0,9]^*$ that sends every word to a word obtained by replacing every $1$ (resp., $2$) in an even (resp., odd) position by the letter $a$ and every letter $a$ in an even (resp., odd) position by the letter $1$ (resp., $2$).
    Then the map $f_{a,1}: Y_1\to Y_2, u \mapsto f_a(u)$ is a bijection.
    Furthermore, if the letter $a$ is non-zero, then $v_{n+1}=x_{n+1} - x_n$ for all $n\ge 0$.
\end{proposition}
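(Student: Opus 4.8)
The plan is to view $f_a$ as a length-preserving \emph{involution} of $[0,9]^*$ that rewrites each letter according only to the parity of its position, and that is designed so that the two parity-constrained forbidden factors---a $12$ sitting at an even position and a $21$ sitting at an odd position---are precisely the factors produced from an occurrence of $aa$, and from nothing else. The first thing I would record is that $f_a$ is its own inverse: on each coordinate it merely transposes two letters and fixes the rest, and since $f_a$ never moves letters between positions, applying it twice restores the word. This plays the role of the identities $f_2\circ f_1=\mathrm{id}$ and $f_1\circ f_2=\mathrm{id}$ in the proof of~\cref{pro:bijection-blocks}. Granting it, the bijectivity of $f_{a,1}\colon Y_1\to Y_2$ follows exactly as in~\cref{pro:bijection-blocks} once one verifies the two containments $f_a(Y_1)\subseteq Y_2$ and $f_a(Y_2)\subseteq Y_1$.

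The substance of the bijection is a single pointwise equivalence, which I would prove by inspecting the transpositions position by position: for every word $w$ and every index $i$, the length-two factor of $f_a(w)$ at position $i$ equals $aa$ if and only if the factor of $w$ at position $i$ is $12$ when $i$ is even and is $21$ when $i$ is odd, while factors at the opposite parity and all letters not touched by the relevant transposition are left unchanged. Reading this in the direction $w\mapsto f_a(w)$ turns each admissible block of a word of $Y_1$ into an occurrence of $aa$ and creates no other $aa$, which gives $f_a(Y_1)\subseteq Y_2$; combining it with the involution property I would then deduce the reverse containment $f_a(Y_2)\subseteq Y_1$. I expect this block bookkeeping to be the main obstacle: the delicate point is the degenerate cases $a\in\{1,2\}$, in which one of the two transpositions collapses to the identity, so that the equivalence has to be re-examined by hand to make sure no spurious $aa$ (or forbidden block) is created or destroyed.

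For the counting identity I would exploit that $f_a$ is in fact a bijection of the whole set $[0,9]^n$, so the pointwise equivalence shows that $f_a$ maps the length-$n$ words avoiding $aa$ bijectively onto the length-$n$ words that avoid both a $12$ at even positions and a $21$ at odd positions, i.e. onto $L_2'\cap[0,9]^n$. Writing $v_n':=|L_2'\cap[0,9]^n|$, this yields $v_n'=x_n$. It then remains to reinstate the leading-digit restriction: deleting the leading letter of a length-$n$ word of $L_2'$ whose first letter is $0$ produces a length-$(n-1)$ word of $L_2'$, and prepending $0$ inverts this, since a block beginning with $0$ is never forbidden; hence the number of length-$n$ words of $L_2'$ starting with $0$ equals $v_{n-1}'$, and so $v_n=v_n'-v_{n-1}'=x_n-x_{n-1}$, that is, $v_{n+1}=x_{n+1}-x_n$ for all $n\ge 0$. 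The hypothesis $a\neq 0$ enters exactly to keep this last passage transparent: because $0$ is a fixed point of every transposition used by $f_a$ precisely when $a\neq 0$, a leading $0$ is preserved by $f_a$ and can never belong to an $aa$ block, so the leading-digit reduction stays compatible with the $aa$-picture. In short, once the position-by-position equivalence between $aa$ and the two parity-restricted blocks is secured, both the bijection and the first-difference identity follow formally.
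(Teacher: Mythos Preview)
Your proposal is correct and follows essentially the same approach as the paper: the bijection is handled via the involution property of $f_a$ together with the two containments $f_a(Y_1)\subseteq Y_2$ and $f_a(Y_2)\subseteq Y_1$, exactly as in the paper's reference to \cref{pro:bijection-blocks}; the only cosmetic difference is that you carry out the leading-zero bookkeeping on the $L_2'$ side (first $v_n'=x_n$, then $v_n=v_n'-v_{n-1}'$), whereas the paper does it on the $aa$-avoiding side (splitting $x_{n+1}$ by the leading digit and invoking the complemented bijection on the non-zero-leading part). Since $f_a$ fixes the letter $0$ whenever $a\neq 0$, the two orderings are equivalent.
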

\begin{proof}
    The proof of the first part of the statement follows the same lines as that of~\cref{pro:bijection-blocks} by considering the map $f_{a,2}: Y_2\to Y_1, v \mapsto f_a(v)$ instead.
    Let us prove the second part.
    The number of decimal length-$(n+1)$ words avoiding $aa$, which is $x_{n+1}$, is equal to the sum of the number of decimal length-$n$ words avoiding $aa$ with the number of decimal length-$(n+1)$ words avoiding $aa$ and not starting with $0$.
    The first number is $x_n$ by definition and the second is $v_{n+1}$ by the first part of the statement.
    Indeed, the first part of the statement implies that the number of length-$n$ words in $Y_1$ is equal to the number of decimal length-$n$ words containing $aa$.
    Now complementing the two languages, getting rid of words starting with $0$ and since $a\neq 0$, we obtain $x_{n+1} = v_{n+1} + x_n$, as desired.
    \qed
\end{proof}



\begin{theorem}
\label{thm:abscissa-convergence-blocks-12-21}
    Consider the dominant root $\lambda= \frac{3}{2} (3+\sqrt{13})$ 
    of the characteristic polynomial $P(x)=x^2-9x-9$ of the recurrence in 
    \eqref{eq: recurrence for 2 overlapping blocks}.
    Then the abscissa of convergence of the restricted Dirichlet 
    series $F_{L_2}(z)$ is $\frac{\log \lambda}{\log 10}$.
\end{theorem}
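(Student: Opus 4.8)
The plan is to reduce \cref{thm:abscissa-convergence-blocks-12-21} to \cref{le:useful} exactly as the all-distinct-letter case reduced \cref{thm:abscissa-convergence-blocks-12-89} to it. The key objects are already in place: $v_n$ counts the length-$n$ words of $L_2$, and \cref{pro:bijection-blocks-overlapping} gives the clean identity $v_{n+1} = x_{n+1} - x_n$ for $n \ge 0$, where $\seq{x}{n}{0}$ satisfies the linear recurrence \eqref{eq: recurrence for 2 overlapping blocks}. So the first step is to get a closed-form asymptotic for $x_n$, and hence for $v_n$, from that recurrence.

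First I would solve the characteristic equation $x^2 - 9x - 9 = 0$, whose roots are $\lambda = \tfrac{3}{2}(3+\sqrt{13})$ and $\mu = \tfrac{3}{2}(3-\sqrt{13})$. Since $|\mu| < 1 < \lambda$, the general-solution formula recalled in \cref{sec:notation and def} gives $x_n = c_1 \lambda^n + c_2 \mu^n$ with $c_1 > 0$ (the positive sign of $c_1$ is forced because $x_n > 0$ and grows, while $\mu^n \to 0$). Consequently $v_{n+1} = x_{n+1} - x_n = c_1(\lambda - 1)\lambda^n + c_2(\mu - 1)\mu^n$, and since $\lambda > 1$ the dominant term is $c_1(\lambda-1)\lambda^n$, so $v_n \sim C \lambda^n$ for a positive constant $C$. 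I would note that $v_n \ge 0$ and, being an eventually-dominant-$\lambda^n$ sequence with positive leading coefficient, is positive for large $n$.

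Next I would pass from $v_n$ to the summatory function $A(n)$ of the characteristic sequence $\seq{s}{n}{1}$ of $L_2$ in base $10$, observing as in the previous section that $A(10^k) = \sum_{n=0}^{k} v_n$ (the words of length at most $k$ not starting with $0$, i.e.\ representing integers in $[1,10^k)$, together with handling the endpoint). From $v_n \sim C\lambda^n$ we get $A(10^k) = \Theta(\lambda^k)$, hence $A(10^k) = \lambda^{k(1+o(1))}$, which is precisely the hypothesis of \cref{le:useful} with $b = 10$. Applying \cref{le:useful} yields $\frac{\log A(n)}{\log n} \sim \frac{\log \lambda}{\log 10}$, and since the partial sums $A(n)$ are positive and tend to infinity, \cref{lem:Titchmarsh} identifies this limit with the abscissa of convergence of $F_{L_2}(z)$, giving the claimed value $\frac{\log\lambda}{\log 10}$.

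The main obstacle is not the asymptotic analysis, which is routine once the recurrence is in hand, but rather justifying that $v_{n+1} = x_{n+1} - x_n$ genuinely counts $L_2$ and feeds correctly into $A(10^k)$ — in particular, being careful that $v_n$ counts words of $L_2$ (which exclude leading zeros) whereas the bijection of \cref{pro:bijection-blocks-overlapping} is phrased over the full language, and that the boundary contributions $A(10^k)$ versus $A(10^k - 1)$ and the single point $10^k$ do not disturb the $\Theta(\lambda^k)$ estimate. These are exactly the bookkeeping issues already navigated for $L_1$, so I would mirror that argument; the only genuinely new input here is confirming $\lambda > 1$ and $|\mu| < 1$ so that a single geometric term dominates.
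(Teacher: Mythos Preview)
Your proposal is correct and follows essentially the same route as the paper: use \cref{pro:bijection-blocks-overlapping} to relate the $v_n$ to the linearly recurrent $x_n$, extract the dominant-root asymptotic $\lambda^n$, and then feed $A(10^k)=\lambda^{k(1+o(1))}$ into \cref{le:useful} and \cref{lem:Titchmarsh}. The only shortcut you miss is that the identity $v_{n+1}=x_{n+1}-x_n$ telescopes to give $A(10^k)=v_1+\cdots+v_k+1=x_k$ exactly, so one can read $A(10^k)\sim C\lambda^k$ directly off $x_k$ without first deriving asymptotics for $v_n$ and re-summing.
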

\begin{proof}
    Let $(s_n)_{n\ge 0}$ be the characteristic sequence of $L_2$ in base $b=10$ and let $A(n)$ denote the summatory function of the sequence $(s_n)_{n\ge 0}$, i.e., $A(n) = \sum_{1 \leq i \leq n} s_i$.
    By~\cref{lem:Titchmarsh}, the abscissa of convergence of the restricted Dirichlet 
    series $F_{L_2}(z)$ is governed by the asymptotics of $(\log A(n) /\log n)_{n\ge 0}$.
    Observe that the behavior of $A((10)^n)$ is determined by that of $x_n$: indeed, $A((10)^n)$ counts the integers $m$ with $1\le m\le (10)^n$ having a base-$10$ representation in $L_2$, so $v_1 + \cdots + v_n +1=x_n$ by~\cref{pro:bijection-blocks-overlapping}.
    Now note that the sequence $(x_n)_{n\ge 0}$ satisfies $x_n \sim C \lambda^n$ when $n$ goes to infinity for some positive constant $C$ (we even have $C \approx 1.0084$; recall the end of~\cref{sec:notation and def}).
    So~\cref{le:useful} implies that the abscissa of convergence of $F_{L_2}(z)$ is $\frac{\log \lambda}{\log 10}$. \qed
\end{proof}

We can also count the number of words in $L_2$ using the Goulden-Jackson cluster method, using the same idea as in~\cref{sec:disjoint blocks}.
Define $d_n$ to be the number of length-$n$ words over the larger alphabet $\{a,b'\mid 0\le a,b\le 9\}$ that avoid $ab$ and $a'b'$ for $a,b\in[0,9]$ and also $1'2$ and $21'$.
Then, the Goulden-Jackson cluster method, as implemented in Maple by the {\tt DAVID\_IAN} package, demonstrates that  $\sum_{n \geq 0} d_n x^n = (1 +11x+9x^2)/(1-9x-9x^2)$.  It now follows that $(d_{n+1}-d_n)/2$ is equal to $v_{n+1}$.

\subsection{\textit{To infinity and beyond!}}

Other results, involving in particular several sequences in the OEIS, can be worked out using similar methods, but also
by using the method in \cite{Noonan-Zeilberger-1999}.
Here, we give examples that do not use \cite{Noonan-Zeilberger-1999}.
For example, inspired by the previous section, we fix two integers 
$b,k\ge 2$ and we define a sequence 
$\seq{y}{n}{0}$ analogous to $\seq{x}{n}{0}$ from~\cref{sec:blocks sharing letters}: we set $y_n=b^n$ 
for all $n\in [0,k-1]$ and, for all $n\ge k$, we define
\begin{equation}
\label{eq: 3rd recurrence}
    y_n = (b-1) \cdot \sum_{i=1}^{k} y_{n-i}.
\end{equation}

\begin{proposition}
\label{pro: patter avoidance and sequence of first differences}
    The sequence $(y_{n+1} - y_n)_{n\ge 0}$ counts the number of 
    length-$(n+1)$ words over $[0,b-1]$ that do not start with $0$ and 
    that avoid the $k$-th power $a^k$, for some fixed letter $a\in [0,b-1]$.
\end{proposition}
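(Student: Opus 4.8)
The plan is to prove the statement in two stages. First I would show that $y_n$ itself counts the length-$n$ words over $[0,b-1]$ that avoid the factor $a^k$, with no restriction on the first letter; then I would extract the first-difference count from this. Throughout I assume $a\neq 0$, which is the exact analogue of the hypothesis already imposed in \cref{pro:bijection-blocks-overlapping}, and which turns out to be precisely what makes the second stage go through.

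For the first stage, let $W_n$ denote the number of length-$n$ words over $[0,b-1]$ containing no occurrence of the block $a^k$. For $n\in[0,k-1]$ such a word is too short to contain $a^k$, so $W_n=b^n=y_n$, matching the initial conditions. For $n\geq k$ I would obtain a recurrence by classifying a word $w$ of length $n$ according to its maximal suffix of $a$'s: write $w=w'a^j$ with $0\leq j\leq k-1$ (the upper bound coming from avoidance of $a^k$), where $w'$ is empty or ends in a letter different from $a$. Since $n\geq k$, the all-$a$ case $j=n$ cannot occur, so $w'$ is non-empty and ends in one of the $b-1$ letters distinct from $a$, preceded by an arbitrary length-$(n-j-1)$ word avoiding $a^k$. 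Summing over $j$ and substituting $i=j+1\in[1,k]$ gives $W_n=(b-1)\sum_{i=1}^{k}W_{n-i}$, which is exactly \eqref{eq: 3rd recurrence}. An immediate induction then yields $W_n=y_n$ for all $n\geq 0$.

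For the second stage, I would show that the number of length-$(n+1)$ words avoiding $a^k$ that start with $0$ equals $W_n$. The map sending such a word to the word obtained by deleting its leading letter lands among the length-$n$ words avoiding $a^k$, and I would show it is a bijection by exhibiting the inverse ``prepend a $0$''. This is the one place where $a\neq 0$ is essential: prepending $0$ to a word $u$ avoiding $a^k$ can never create a new occurrence of $a^k$, since the new leading letter is $0\neq a$ and so cannot lengthen any run of $a$'s, while conversely deleting a leading $0$ introduces no occurrence. Hence the two operations are mutually inverse, and the number of length-$(n+1)$ words avoiding $a^k$ and starting with $0$ is $W_n=y_n$. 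Subtracting this from the total $W_{n+1}=y_{n+1}$ leaves exactly the words avoiding $a^k$ that do \emph{not} start with $0$, and their number is $y_{n+1}-y_n$, as claimed.

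I expect the main obstacle to be the bookkeeping of the first stage rather than anything conceptual: one must treat the short/empty words correctly in the base cases and, above all, rule out the all-$a$ suffix in the decomposition (legitimate only because $n\geq k$) so that the factor $b-1$ and the range $i\in[1,k]$ emerge correctly. The second stage is a one-line bijection, but its validity hinges entirely on $a\neq 0$; without that hypothesis prepending $0$ can manufacture the forbidden block (for instance turning $0^{k-1}$ into $0^k$), and the clean identity $y_{n+1}-y_n$ breaks down.
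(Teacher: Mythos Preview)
Your approach is essentially the same as the paper's, only far more explicit: the paper's entire proof reads ``For all $m\ge 0$, $y_m$ gives the number of length-$m$ words over $[0,b-1]$ that do not contain the $k$th power $a^k$, so $y_{n+1}-y_n$ gives the expected number'', asserting both stages without argument. Your suffix-decomposition derivation of the recurrence and your leading-$0$ bijection are exactly the details one would supply to justify those two assertions.

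One genuine improvement in your write-up: you correctly isolate the hypothesis $a\neq 0$, which the paper's statement omits but which is indeed required. For $a=0$ the second stage fails (prepending $0$ can create $0^k$), and the identity breaks already in small cases; e.g., with $b=2$, $k=2$, $a=0$ one has $y_2-y_1=1$ while there are two binary length-$2$ words not starting with $0$ and avoiding $00$. So your added hypothesis is not cosmetic but a necessary correction.
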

\begin{proof}
    For all $m\ge 0$, $y_m$ gives the number of length-$m$ words 
    over $[0,b-1]$ that do not contain the $k$th power $a^k$, so $y_{n+1}-y_n$ gives the expected number. \qed
\end{proof}

\begin{example}
For all $k\ge 2$, the sequence $\seq{y}{n}{0}$ corresponding to $b=2$ 
gives the $k$-bonacci numbers: indeed, binary $k$-bonacci words avoid 
the block $1^k$.
For $k=2$ and $b=10$, we again find the sequence $\seq{x}{n}{0}$ defined by Equation~\eqref{eq: recurrence for 2 overlapping blocks}. 
Other cases are displayed in~\cref{tab:examples of seq in Sloane}.
\begin{table}
    \centering
    \caption{Depending on the values of the parameters $k$ and $b$, the entry number in~\cite{Sloane} of the sequence $\seq{y}{n}{0}$ is given (sometimes, one needs to crop the first few terms).} 
    \label{tab:examples of seq in Sloane}
    \begin{tabular}{c|c|c}
       $k$ & $b$ & entry number of $\seq{y}{n}{0}$ in~\cite{Sloane} \\
       \hline
       $2$ & $3$ & A028859, A155020 \\
       $2$ & $4$ & A125145 \\
       $2$ & $5$ & A086347 \\
       $2$ & $6$ & A180033 \\
       $2$ & $7$ & A180167 \\
       $2$ & $10$ & A322054 \\
       \hline
       $3$ & $3$ & A119826 \\
       $3$ & $4$ & A282310
    \end{tabular}
\end{table}
\end{example}

We study the characteristic polynomial associated with 
the sequence $\seq{y}{n}{0}$.
Recall that a real number is \emph{Pisot} 
if it is an algebraic integer larger than $1$ whose (other) algebraic conjugates have modulus less than $1$.

\begin{proposition}
\label{pro:dominant-root}
The characteristic polynomial $P(x)= x^k- (b-1) \sum_{i=0}^{k-1} x^i$ of the recurrence in~\eqref{eq: 3rd recurrence} has a dominant root that is a Pisot number and belongs to the interval $(1,b)$.
\end{proposition}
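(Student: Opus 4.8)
The plan is to work with the monic integer polynomial obtained by clearing the geometric sum. Multiplying $P$ by $(x-1)$ telescopes to
\[
Q(x) := (x-1)P(x) = x^{k+1} - b x^k + (b-1),
\]
so the roots of $P$ are exactly the roots of $Q$ other than $x=1$ (and $x=1$ is not a root of $P$, since $P(1)=1-(b-1)k<0$ for $b,k\ge 2$). First I would locate the real root: $P(1)<0$ while $P(b)=b^k-(b-1)\frac{b^k-1}{b-1}=1>0$, so by the intermediate value theorem $P$ has a root $\lambda\in(1,b)$. Writing a root $z\ne 1$ of $P$ in the form $g(z)=b-1$ with $g(x)=x^k(b-x)$, and noting that $g$ increases on $(0,\tfrac{bk}{k+1})$ and decreases afterwards with $g(1)=b-1$, shows that $\lambda$ is the unique such root in $(1,b)$ and that $\lambda>\tfrac{bk}{k+1}$; in particular $Q'(\lambda)=\lambda^{k-1}((k+1)\lambda-bk)>0$, so $\lambda$ is a simple root. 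Since $P$ is monic with integer coefficients, $\lambda$ is an algebraic integer, and the remaining task for the Pisot claim is to show that every root of $P$ other than $\lambda$ lies in the open unit disk.

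For the separation I would exploit the identity $z^k(b-z)=b-1$ satisfied by every root $z\ne 1$ of $P$, i.e.\ $z^k=(b-1)/(b-z)$. Taking moduli gives $|z|^k\,|b-z|=b-1$, so if $|z|\ge 1$ then $|b-z|\le b-1$: every root with $|z|\ge 1$ lies in the closed disk $\overline D=\{w:|w-b|\le b-1\}$. This disk is tangent to the unit circle at the single point $z=1$ (its nearest point to the origin is at distance $b-(b-1)=1$), so all of $\overline D$ except $z=1$ lies strictly outside the unit disk. The same identity shows $P$ has no root on the unit circle (on $|z|=1$ one has $|z^k(b-z)|=|b-z|\ge b-1$ with equality only at $z=1$, which is not a root of $P$) and no root on $\partial\overline D$ (there $|b-z|=b-1$ forces $|z|=1$, hence $z=1$). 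Consequently the roots of $P$ with $|z|\ge 1$ are exactly its roots lying in the interior of $\overline D$, and it suffices to prove that this interior contains a single root, namely $\lambda$.

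I would count these by the argument principle on $\partial\overline D$, parametrised by $z(\phi)=b+(b-1)e^{i\phi}$. Then $z-b=(b-1)e^{i\phi}$ gives the clean expression
\[
Q(z(\phi)) = z^k(z-b)+(b-1) = (b-1)\bigl(e^{i\phi}z(\phi)^k+1\bigr),
\]
so the number of zeros of $Q$ (equivalently $P$) inside $\overline D$ equals the winding number about the origin of $\zeta(\phi)+1$, where $\zeta(\phi)=e^{i\phi}z(\phi)^k$. Since $|\zeta(\phi)|=|z(\phi)|^k\ge 1$, with equality only at $\phi=\pi$ (where $z=1$ and $\zeta=-1$), the curve $\zeta$ stays outside the open unit disk and touches it only at $-1$; hence its winding number about every point of the open unit disk is the same, equal to its winding about $0$, which is $\tfrac{1}{2\pi}\Delta_\phi\bigl(\phi+k\arg z(\phi)\bigr)=1$ because $z(\phi)$ traces $\partial\overline D$ without enclosing the origin. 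Indenting the contour to exclude the tangency point $z=1$ then yields winding number $1$ about $-1$ as well, so the interior of $\overline D$ contains exactly one root of $P$. Therefore $\lambda$ is the unique root of $P$ outside the closed unit disk; it is the dominant root, and since its algebraic conjugates are roots of $P$ distinct from the simple root $\lambda$, they all have modulus $<1$, making $\lambda$ a Pisot number in $(1,b)$.

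The delicate point throughout is the tangency of $\overline D$ to the unit circle at $z=1$: this point is a genuine zero of $Q$ lying simultaneously on the unit circle and on $\partial\overline D$, so every naive application of Rouch\'e's theorem on the unit circle ties at $z=1$ (and in fact at all $(k+1)$-th roots of unity, where $|x^{k+1}+(b-1)|=|bx^k|$), and the count must be extracted by the careful indentation in the argument principle above. As an independent check one can use that the companion matrix of the recurrence is nonnegative and primitive, so Perron--Frobenius already guarantees a simple real dominant root; and the product of all roots of $P$ equals $b-1$, while $P(b-1)<0$ (a direct computation) gives $\lambda>b-1$, so the remaining roots have moduli with product $(b-1)/\lambda<1$, consistent with all of them lying inside the unit disk.
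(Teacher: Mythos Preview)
Your proof is correct and considerably more detailed than the paper's, which simply invokes a classical theorem of Brauer (1951) on polynomials of the form $x^k - a_{k-1}x^{k-1} - \cdots - a_0$ with $a_{k-1}\ge\cdots\ge a_0>0$ to conclude that $P$ has one root of modulus greater than $1$ and all others of modulus less than $1$, and then checks $P(1)<0<P(b)$ to locate that root. Your argument, by contrast, is entirely self-contained: the telescoping to $Q(x)=x^{k+1}-bx^k+(b-1)$ and the key observation that every root of $P$ with $|z|\ge 1$ must lie in the disk $\overline D=\{|w-b|\le b-1\}$ (which meets the closed unit disk only at $z=1$) is an elegant reduction, and the winding-number count on $\partial\overline D$ finishes the job. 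What you gain is independence from the literature; what the paper gains is brevity.

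Two minor comments. First, the indentation step, while standard, is the one place a skeptical reader might pause; it is slightly cleaner to run the argument principle on the circle $|z-b|=b-1-\epsilon$ for small $\epsilon>0$: there one gets $Q(z)=(b-1)(\tilde\zeta+1)$ with $\tilde\zeta=\frac{b-1-\epsilon}{\,b-1\,}e^{i\phi}z^k$, and since $|z|\ge 1+\epsilon$ on this circle one checks directly that $|\tilde\zeta|>1$ everywhere (using $(b-1)k>1$), so that $-1$ lies in the same complementary component as $0$ and the winding is $1$ without any indentation. Second, in your closing sanity check the product of the roots of $P$ is $(-1)^{k+1}(b-1)$ rather than $b-1$; of course only the absolute value matters for your conclusion, and the computation $P(b-1)<0$ giving $\lambda>b-1$ (hence $\prod_{z\ne\lambda}|z|<1$) is correct.
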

\begin{proof}
From the proof of~\cite[Theorem~2 (p.~254)]{Brauer-1951}, we deduce that the polynomial $P(x)$ has one real root $\beta$ with modulus larger than $1$ and all other roots with modulus less than $1$.
Next, observe that $P(1) = 1 - k (b-1) < 0$ and $P(b)=b^k- (b-1) \frac{b^k-1}{b-1}=1>0$, so $\beta \in (1,b)$.
All in all, this proves the claim.
\qed
\end{proof}

Let us come back to our Dirichlet series of interest.

\begin{theorem}
\label{thm: avoiding block of a}
Let $L_3$ be the set of words over $[0,b-1]$ that do not start with 
$0$ and that avoid the $k$-th power $a^k$, for some fixed letter $a\in [0,b-1]$.
Let $\lambda$ be the dominant root of the characteristic polynomial of the recurrence in~\eqref{eq: 3rd recurrence}.
Then the abscissa of convergence of the series $F_{L_3}(z)$ is equal to 
$\frac{\log\lambda}{\log b}$.
\end{theorem}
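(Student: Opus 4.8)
The plan is to mirror the structure of the proof of \cref{thm:abscissa-convergence-blocks-12-21}, invoking \cref{le:useful} together with the asymptotics of the linear recurrence \eqref{eq: 3rd recurrence}. First I would let $(s_n)_{n\ge 0}$ be the characteristic sequence of $L_3$ in base $b$ and let $A(n)=\sum_{1\le i\le n}s_i$ be its summatory function, so that by \cref{lem:Titchmarsh} the abscissa of convergence of $F_{L_3}(z)$ equals $\limsup_{n\to\infty}\frac{\log A(n)}{\log n}$. The key reduction is to understand $A(b^n)$: this quantity counts the integers $m$ with $1\le m\le b^n$ whose base-$b$ representation lies in $L_3$, i.e., length-$\le n$ words over $[0,b-1]$ not starting with $0$ and avoiding $a^k$. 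By \cref{pro: patter avoidance and sequence of first differences}, the number of such words of length exactly $j+1$ is $y_{j+1}-y_j$, so summing a telescoping series gives $A(b^n)=\sum_{j=0}^{n-1}(y_{j+1}-y_j)+c = y_n - y_0 + c$ for a small correction constant $c$ accounting for the single integer $b^n$ and the $0$-length conventions. Hence $A(b^n)=y_n+O(1)$.

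Next I would pin down the growth of $y_n$. By \cref{pro:dominant-root}, the characteristic polynomial $P(x)=x^k-(b-1)\sum_{i=0}^{k-1}x^i$ has a dominant root $\lambda\in(1,b)$ that is a Pisot number, and in particular $\lambda$ strictly exceeds the modulus of every other root. Invoking the explicit form of solutions to linear recurrences recalled at the end of \cref{sec:notation and def}, we have $y_n=\sum_{i=1}^r P_i(n)\alpha_i^n$ where $\alpha_1=\lambda$ is simple (so $P_1$ is a nonzero constant) and all other $|\alpha_i|<\lambda$. Therefore $y_n\sim C\lambda^n$ for some constant $C>0$, and consequently $A(b^n)=C\lambda^n(1+o(1))$, which gives $A(b^n)=\lambda^{n(1+o(1))}$, precisely the hypothesis of \cref{le:useful} with $\lambda>1$.

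Finally, applying \cref{le:useful} yields $\frac{\log A(n)}{\log n}\to\frac{\log\lambda}{\log b}$ as $n\to\infty$; in particular the $\limsup$ in \cref{lem:Titchmarsh} is this limit, so the abscissa of convergence of $F_{L_3}(z)$ equals $\frac{\log\lambda}{\log b}$, as claimed.

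I expect the main obstacle to be the bookkeeping in the reduction $A(b^n)=y_n+O(1)$: one must be careful that $y_m$ counts \emph{all} length-$m$ words (including those starting with $0$ and the empty word) while $A(b^n)$ counts genuine base-$b$ representations, so the telescoping of first differences is exactly what converts the ``all words'' count $y_m$ into the ``valid representations'' count, with only boundary terms left over. The constant $C>0$ (rather than merely $C\ge 0$) also deserves a line: it is nonzero because $\lambda$ is a simple dominant root and the initial data $y_n=b^n$ for $n<k$ cannot produce cancellation of the leading term, so positivity follows from $y_n$ being a count of nonnegative quantities growing to infinity.
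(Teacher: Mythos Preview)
Your proposal is correct and follows essentially the same approach as the paper's proof, which is only a sketch referring back to the proof of \cref{thm:abscissa-convergence-blocks-12-21}: set up the summatory function, relate $A(b^n)$ to $y_n$ via the telescoping first differences of \cref{pro: patter avoidance and sequence of first differences}, invoke \cref{pro:dominant-root} to get $y_n\sim C\lambda^n$, and conclude with \cref{le:useful} and \cref{lem:Titchmarsh}. Your added care about the boundary correction in $A(b^n)=y_n+O(1)$ and the positivity of $C$ goes slightly beyond what the paper spells out, but the route is the same.
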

\begin{proof}[Sketch]
The proof is similar to that of~\cref{thm:abscissa-convergence-blocks-12-21}.
By~\cref{pro:dominant-root}, the growth rate of the sequence $(y_n)_{n\ge 0}$ is $\lambda^n$.
We conclude by using~\cref{lem:Titchmarsh,le:useful} together with~\cref{pro: patter avoidance and sequence of first differences}. \qed
\end{proof}

\section{Alternative approaches}
\label{sec:alternative approach}

As indicated in~\cref{lem:Titchmarsh}, the abscissa of
convergence of the Dirichlet series $F_s(z)=\sum_{n\ge 1} s_n/n^z$ can be
deduced from the asymptotic behavior of the summatory 
function $A(n)=\sum_{1 \le  i\leq n} s_i$. For the case of a $b$-automatic (or $b$-regular) sequence $(s_n)_{n \geq 0}$, an abundant literature studies this behavior: we 
will restrict ourselves to cite one of the papers of Dumont and Thomas \cite{Dumont-Thomas}, the pioneering work of Dumas~\cite{Dumas-2007,Dumas-2013,Dumas-2014}, the chapter of Drmota and Grabner \cite{Drmota-Grabner} (in 
\cite{CANT2010}), and the paper of Heuberger and Krenn 
\cite{Heuberger-Krenn-2020}. In this section, we will 
take advantage of the fact that the sequences considered so far are automatic. 
Nevertheless, as will be seen in~\cref{sec:unusual} (with~\cref{pro:non-auto}), our previous method can be applied outside the scope of $b$-regular sequences.

Before giving alternative proofs of the main results in~\cref{sec:myriad}, we show how the following powerful result by Drmota and Grabner handles the simple case of avoiding a single letter.

\begin{theorem}[{Drmota and Grabner, \cite[Theorem 9.2.15]{CANT2010}}]
\label{thm: Drmota and Grabner}
    Let $b\ge 2$ be an integer.
    Let $\seq{s}{n}{0}$ be a $b$-regular sequence whose linear representation is given by the vectors $V,W$ and the matrices $M_0,\ldots,M_{b-1}$.
    Write $M_{s,b} = \sum_{i=0}^{b-1} M_i$ and assume that $M_{s,b}$ has a unique eigenvalue $\lambda>0$ of maximal modulus and that $\lambda$ has algebraic multiplicity $1$.
    Assume further that $\lambda > \max_i ||M_i||$ for some matrix norm $||\cdot||$.
    Let $\mu$ be the modulus of the second largest eigenvalue.
    Then there exists a periodic continuous function $\Phi$ such that 
    \[
    \sum_{i < n} s_i = n^{\log_b \lambda} \Phi(\log_b n) + O(n^{\log_b \mu}) + O(\log n).
    \]
\end{theorem}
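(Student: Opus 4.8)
The plan is to reduce the scalar summatory function to a matrix-valued one that obeys an exact self-similar recursion at powers of $b$, and then to interpolate between consecutive powers of $b$ by a digit decomposition, with the fluctuation $\Phi$ emerging precisely from this interpolation. First I would introduce the matrix-valued partial sum $\mathcal{A}(N) = \sum_{0 \le n < N} M_{w_k} \cdots M_{w_0}$, where $\rep_b(n) = w_k \cdots w_0$ and the empty product is the identity, so that $\sum_{i<n} s_i = V \mathcal{A}(n) W$. Splitting each $n \in [0, b^k)$ as $n = bm + d$ with $d \in [0,b-1]$ peels off one factor $M_d$ on the right and yields the exact recursion $\mathcal{A}(b^k) = \mathcal{A}(b^{k-1}) M_{s,b}$, hence $\mathcal{A}(b^k) = M_{s,b}^k$ (up to the standard normalization needed so that leading zeros do not alter the value, i.e.\ $V M_0 = V$, which one arranges on the representation). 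Since $\lambda$ is, by hypothesis, the unique eigenvalue of maximal modulus of $M_{s,b}$ and is simple, the spectral decomposition gives $M_{s,b}^k = \lambda^k P + O(\mu^k)$ with $P$ the rank-one projection onto the $\lambda$-eigenspace; consequently $A(b^k) = c\,\lambda^k + O(\mu^k)$ for an explicit constant $c = V P W$. This already identifies the growth exponent $\log_b \lambda$ along powers of $b$.

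Next I would handle a general $N$ with $b^{k-1} \le N < b^k$ by the digit decomposition of the interval $[0,N)$. Writing $\rep_b(N) = N_{k-1} \cdots N_0$, the interval $[0,N)$ is the disjoint union, over positions $j$ and over digits $d < N_j$, of the blocks of integers whose top digits are $N_{k-1} \cdots N_{j+1}$, whose digit in position $j$ equals $d$, and whose remaining $j$ digits range freely. Applying the recursion from the first step to the free part of each block gives
\[
A(N) = \sum_{j=0}^{k-1} V \Big( M_{N_{k-1}} \cdots M_{N_{j+1}} \Big)\Big( \sum_{d=0}^{N_j - 1} M_d \Big) M_{s,b}^{\,j} W .
\]
Substituting $M_{s,b}^{\,j} = \lambda^j P + O(\mu^j)$ into this formula and factoring out $\lambda^{k} = (b^{k})^{\log_b \lambda}$ isolates a main term of the shape $\lambda^{k}\,\Psi(N_{k-1}, N_{k-2}, \ldots)$, where $\Psi$ depends only on the digit sequence of $N$ and not on $k$.

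The periodic function $\Phi$ then arises by encoding this dependence through the real number $\log_b N$. Indeed, the leading coefficient is invariant under $N \mapsto bN$ up to the factor $\lambda$, which is exactly the functional relation $A(bN) = \lambda A(N) + (\text{lower order})$; writing $N = b^{\log_b N}$ and absorbing the integer part of $\log_b N$ into the power $\lambda^{k}$ forces the remaining factor to be a function $\Phi$ of the fractional part of $\log_b N$, so that $A(N) = N^{\log_b \lambda}\, \Phi(\log_b N) + (\text{error})$ with $\Phi$ of period $1$. I would define $\Phi$ directly as the limit of the rescaled main terms along subsequences with prescribed fractional part, and verify continuity by showing the series defining it converges uniformly: the prefix norm $\| M_{N_{k-1}} \cdots M_{N_{j+1}} \| \le (\max_i \|M_i\|)^{k-j-1}$ is, by the hypothesis $\lambda > \max_i \|M_i\|$, negligible against $\lambda^{k-j}$, so the contributions of long prefixes decay geometrically. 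The two error terms come from the same expansion: the replacement $M_{s,b}^{\,j} = \lambda^j P + O(\mu^j)$ contributes $O\big(\sum_j \mu^j\big) = O(n^{\log_b \mu})$, while when $\mu \le 1$ the sum of the $O(1)$ per-position boundary errors over the $k \sim \log_b n$ positions produces the residual $O(\log n)$.

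The step I expect to be the main obstacle is establishing that $\Phi$ is genuinely well defined and continuous, rather than a mere bookkeeping device: this requires the uniform convergence of the fluctuation series, and it is exactly here that the two spectral hypotheses do the work. Simplicity of the dominant eigenvalue $\lambda$ guarantees a rank-one projection $P$, so the leading term is a single clean power $n^{\log_b \lambda}$ and not a mixture of oscillating exponentials; the norm bound $\lambda > \max_i \|M_i\|$ controls the prefix products and hence the tail of the series. As an alternative, more analytic route in the spirit of the Mellin transform techniques mentioned in the introduction, one could instead establish a meromorphic continuation of $F_s(z)$ and show, via the $b$-adic self-similarity, that its rightmost poles lie at the arithmetic progression $\log_b \lambda + 2\pi i m/\log b$ for $m \in \mathbb{Z}$; a Mellin--Perron contour integral then recovers $\sum_{i<n} s_i$, and summing the residues over these equally spaced poles yields precisely the Fourier expansion of $N^{\log_b \lambda}\Phi(\log_b N)$, with a contour shifted to $\mathfrak{Re}(z) = \log_b \mu$ accounting for the $O(n^{\log_b \mu})$ error.
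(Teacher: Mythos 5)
First, a point of comparison that needs stating: the paper does not prove this theorem at all --- it is quoted, with citation, from Drmota and Grabner's chapter \cite{CANT2010}, so there is no in-paper proof to measure your argument against, and your proposal must stand on its own. Its skeleton is sound and standard: with the leading-zero normalization $V M_0 = V$, the identity $\mathcal{A}(b^k) = M_{s,b}^k$ and your exact digit decomposition of $[0,N)$ are correct, and invoking the hypothesis $\lambda > \max_i \|M_i\|$ to get geometric decay of the prefix contributions is exactly the mechanism that makes the fluctuation series converge uniformly. So the main term $n^{\log_b \lambda}\,\Phi(\log_b n)$ with $\Phi$ periodic and continuous can indeed be extracted along these lines, with some error $o(n^{\log_b \lambda})$ --- modulo details you gloss over: renormalizing the representation to enforce $V M_0 = V$ changes the matrices and hence, in principle, the spectral and norm hypotheses, and continuity of $\Phi$ must be checked at $b$-adic points, where a real number has two digit expansions whose associated series must agree (this follows from the exact recursion, but it needs saying).

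The genuine gap is the error exponent. When you substitute $M_{s,b}^{\,j} = \lambda^j P + O(\mu^j)$ into the digit formula, each remainder $O(\mu^j)$ is multiplied on the left by the prefix product $M_{N_{k-1}} \cdots M_{N_{j+1}}$, whose norm is only bounded by $c^{\,k-1-j}$ with $c = \max_i \|M_i\|$; your estimate therefore yields $O\bigl(\sum_{j<k} c^{\,k-1-j} \mu^j\bigr) = O\bigl(n^{\log_b \max(c,\mu)}\bigr)$, not the claimed $O\bigl(\sum_j \mu^j\bigr) = O(n^{\log_b \mu})$ --- you have silently dropped the prefixes. This is not a removable bookkeeping slip: since $b\,c \ge \|M_{s,b}\| \ge \lambda$, one always has $c \ge \lambda/b$, so whenever $\mu < \lambda/b$ the bound your argument actually proves is strictly weaker than the statement, and no choice of norm repairs it (moreover, if the eigenvalues of modulus $\mu$ are defective, even $O(\mu^j)$ should be $O(j^d \mu^j)$). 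Extracting a second-order term governed by the \emph{spectrum} rather than by norms of digit products is precisely where the real work of Drmota and Grabner's proof lies: it proceeds via the Dirichlet series and Mellin--Perron summation, where the poles of $(I - b^{-z} M_{s,b})^{-1}$ at $\log_b \gamma - \ell + 2\pi i m/\log b$ locate the error (compare the continuation result of \cite{AMFP} invoked later in the paper), and the $O(\log n)$ term arises near $z=0$ from the $n^z/z$ kernel. Your closing paragraph sketches exactly this route, so you have correctly identified the method that works --- but it remains an unexecuted alternative in your write-up: the growth estimates on vertical lines needed to justify shifting the contour to $\mathfrak{Re}(z)$ near $\log_b \mu$ and to sum the residues into the Fourier series of $\Phi$, which are the technical heart of the cited proof, are not addressed.
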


We rephrase~\cref{cor:avoid only one letter}.

\begin{corollary}
\label{cor:avoid only one letter bis}
    Let $b\ge 2$ be an integer and let $a$ be a non-zero letter in $[1,b-1]$.
    Let $L_4$ be the language over $[0,b-1]$ of base-$b$ representations that avoid the letter $a$.
    Then the abscissa of convergence of the series $F_{L_4}(z)$ is equal to $\log_{b} (b-1)$.
\end{corollary}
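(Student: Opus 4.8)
The plan is to exhibit a linear representation of the characteristic sequence $\seq{s}{n}{0}$ of $L_4$, apply \cref{thm: Drmota and Grabner} to control its summatory function, and then invoke \cref{lem:Titchmarsh}. Since avoiding a single letter only requires remembering whether $a$ has already appeared, and since a word containing $a$ contributes $0$ to the series forever after, the cleanest choice is the one-dimensional representation $V=W=(1)$ with $M_d=(1)$ for every digit $d\neq a$ and $M_a=(0)$. Reading $\rep_b(n)=w_k\cdots w_0$, the product $V M_{w_k}\cdots M_{w_0} W$ equals $1$ precisely when no factor is $M_a$, i.e.\ when $n$ avoids the digit $a$, so this is indeed the characteristic sequence of $L_4$. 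Because $a$ is nonzero, prepending leading zeroes does not change the value (as $M_0=(1)$), so the representation is consistent with the canonical $b$-ary expansions and $\seq{s}{n}{0}$ is $b$-regular (in fact $b$-automatic).

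First I would compute $M_{s,b}=\sum_{d=0}^{b-1}M_d=(b-1)$, a $1\times 1$ matrix whose unique eigenvalue is $\lambda=b-1$, of algebraic multiplicity $1$; there is no second eigenvalue, so $\mu=0$. The hypotheses of \cref{thm: Drmota and Grabner} then reduce to $\lambda>\max_d\|M_d\|=1$, which holds as soon as $b\ge 3$. The theorem yields
\[
A(n)=\sum_{i<n}s_i=n^{\log_b(b-1)}\,\Phi(\log_b n)+O(\log n)
\]
for a continuous periodic function $\Phi$.

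Next I would feed this into \cref{lem:Titchmarsh}. Since $A(n)$ is nondecreasing, positive for $b\ge 3$, and tends to infinity, and since $A(b^k)$ counts (up to a bounded boundary term) exactly the length-$k$ zero-padded words avoiding $a$, namely $(b-1)^k$, the main term governs the growth and $\log A(n)/\log n\to \log_b(b-1)$. Equivalently one may invoke \cref{le:useful} with this exact value of $A(b^k)$. Either way $\limsup_{n\to +\infty}\log A(n)/\log n=\log_b(b-1)$, which is the abscissa of convergence of $F_{L_4}(z)$ by \cref{lem:Titchmarsh}.

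The main obstacle is choosing a representation compatible with the hypotheses of \cref{thm: Drmota and Grabner}. The naive two-state automaton (a ``valid'' state together with an absorbing ``dead'' state entered upon reading $a$) produces $M_{s,b}=\begin{pmatrix}b-1 & 1\\ 0 & b\end{pmatrix}$, whose dominant eigenvalue is the spurious $b$ coming from the dead state; the theorem, if misapplied to this representation, would predict the wrong exponent $1$. Collapsing the dead state to the $1\times 1$ representation above removes this eigenvalue and exposes the correct $\lambda=b-1$. A secondary point is the degenerate case $b=2$, $a=1$: here $L_4$ contains no representation of a positive integer, the series is empty, and its abscissa is $-\infty$ rather than $\log_2 1=0$; this is consistent with the fact that the corresponding set $D=\{0\}$ is precisely the one excluded in \cref{thm:German-guys}, so the statement should be read for $b\ge 3$.
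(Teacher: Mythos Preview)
Your proof is correct and follows essentially the same route as the paper: a one-dimensional linear representation with $M_c=(1)$ for $c\neq a$ and $M_a=(0)$, yielding $M_{s,b}=(b-1)$ and $\lambda=b-1$, then Drmota--Grabner (\cref{thm: Drmota and Grabner}) combined with \cref{lem:Titchmarsh}. Your write-up is more detailed than the paper's sketch---you explicitly verify the norm hypothesis $\lambda>\max_d\|M_d\|$, flag the degenerate case $b=2$, and explain why the naive two-state representation would give a spurious dominant eigenvalue---but the underlying argument is the same.
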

\begin{proof}[Sketch]
    We use the notation of~\cref{thm: Drmota and Grabner}.
    Let $\seq{s}{n}{0}$ be the characteristic sequence of $L_4$ in base $b$.
    By definition, we have $s_{b n + c} = s_n$ for all $c\neq a$ and $s_{b n + a} = 0$.
    The sequence $(s_n)_{n\ge 0}$ is $b$-regular with $M_c=1$ for all $c\neq a$ and $M_a = 0$.
    Thus $\lambda = b-1$.
    Using~\cref{lem:Titchmarsh,thm: Drmota and Grabner}, the abscissa of convergence of the Dirichlet series $F_{L_4}(z)$ is $\log_{b} (b-1)$. \qed
\end{proof}

We now provide other methods to prove~\cref{thm:abscissa-convergence-blocks-12-89}.

\begin{proof}[Alternative proofs of~\cref{thm:abscissa-convergence-blocks-12-89}]
Consider the language $L_1$ from~\cref{sec:disjoint blocks} and its characteristic sequence $\seq{s}{n}{0}$ in base $10$, 
whose first few terms are 
\[
1, 1, 1, 1, 1, 1, 1, 1, 1, 1, 1, 1, 0, 1, 1, 1, 1, 1, 1, 1, 1.
\]
We have $|L_1 \cap [0,9]^n|=\sum_{i = 10^{n-1}}^{10^n-1} s_i$.
The sequence $\seq{s}{n}{0}$ is $10$-automatic: a deterministic finite automaton with output (DFAO) reading base-$10$ representations (starting with the least significant digit) and outputting the sequence is given in~\cref{fig:automata}.
Note that the computations in this section are done using the \emph{Mathematica} package \textsc{IntegerSequences}~\cite{Rowland18Package-1,Rowland18Package-2} written by Rowland.
\begin{figure}
    \centering
    \includegraphics[scale=0.85]{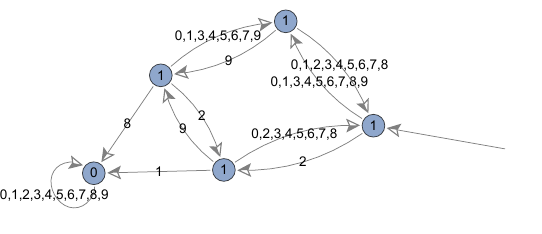}
    \caption{The sequence $\seq{s}{n}{0}$ is generated by the base-$10$ DFAO (it reads base-$10$ representations with least significant digit first)}.
    \label{fig:automata}
\end{figure}
In fact, the $b$-kernel of a $b$-automatic sequence is in bijection with the set of states in some $b$-automaton that generates it with the property that leading zeroes do not affect the output~\cite[Theorem 6.6.2]{Allouche-Shallit-2003}.
In our case, the $10$-kernel of $\seq{s}{n}{0}$ contains the four sequences
\begin{align*}
    (s_n)_{n\ge 0} &= 1, 1, 1, 1, 1, 1, 1, 1, 1, 1, 1, 1, 0, \ldots,\\
    (s_{10n})_{n\ge 0} &= 1, 1, 1, 1, 1, 1, 1, 1, 1, 1, 1, 1, 1, \ldots,\\
    (s_{10n+2})_{n\ge 0} &= 1, 0, 1, 1, 1, 1, 1, 1, 1, 1, 1, 0, 1,\ldots,\\
    (s_{100n+90})_{n\ge 0} &= 1, 1, 1, 1, 1, 1, 1, 1, 0, 1, 1, 1, 0,\ldots,
\end{align*}
and the zero-sequence $(s_{100n+12})_{n\ge 0} = 0,0,0,\ldots$.
The $4\times 4$ matrix whose columns are built on the terms corresponding to $n\in\{0,1,8,12\}$ in each sequence has a non-zero determinant.
Consequently, the $\mathbb{Q}$-vector space generated by the $10$-kernel of $\seq{s}{n}{0}$ is finitely generated by the four linearly independent sequences $(s_n)_{n\ge 0}$, $(s_{10n})_{n\ge 0}$, $(s_{10n+2})_{n\ge 0}$, and $(s_{100n+90})_{n\ge 0}$.
Using the $10$-automaton from~\cref{fig:automata}, one builds up a linear representation $(V,M_0,\ldots,M_9,W)$ of $\seq{s}{n}{0}$ viewed as a $10$-regular sequence.
The sum matrix $M_{s,10}=M_0 + \cdots + M_9$ has eigenvalues $\pm(5+2 \sqrt{6})$ and $\pm(5-2 \sqrt{6})$ (each with multiplicity $1$) and does not fulfill the condition of~\cref{thm: Drmota and Grabner}.
Here, we are at a crossroad: either we stick to base $10$ and use a different result, or we change base.

In the first case, Heuberger and Krenn's more general result~\cite[Theorem A]{Heuberger-Krenn-2020} shows that the growth of the main term of $\sum_{i=1}^{n} s_i$ is $n^{\log_{10} \alpha}$ with $\alpha=5+2 \sqrt{6}$.
(Note that we take the liberty not to recall~\cite[Theorem A]{Heuberger-Krenn-2020} in full as its statement is quite lengthy.)
Taking the logarithm and applying~\cref{lem:Titchmarsh} thus gives~\cref{thm:abscissa-convergence-blocks-12-89}.

In the second case, we use the following trick: it is well-known that a sequence is $b$-automatic if and only if it is $b^\ell$-automatic for all $\ell\ge 1$ (see, for instance, \cite[Theorem 6.6.4]{Allouche-Shallit-2003}).
In our case, the sequence $\seq{s}{n}{0}$ is thus $100$-automatic.
One can build a base-$100$ DFAO generating $\seq{s}{n}{0}$ (it has three states) as well as a linear representation $(V',M'_0,\ldots,M'_{99},W')$ of $\seq{s}{n}{0}$ viewed as a $100$-regular sequence.
The sum matrix $M_{s,100}=M'_0 + \cdots + M'_{99}$ has eigenvalues $\beta=49 + 20\sqrt{6}$ and $49 - 20 \sqrt{6}$. As $\alpha^2 =\beta$, the combination of~\cref{thm: Drmota and Grabner,lem:Titchmarsh} (in base $100$) also gives~\cref{thm:abscissa-convergence-blocks-12-89}.
\qed
\end{proof}

A reasoning similar to what precedes can be conducted to obtain an alternative proof of~\cref{thm:abscissa-convergence-blocks-12-21}.
Let us now consider yet a different example.
We let $L_5$ denote the language of all words $w_k w_{k-1} \cdots w_0$ over $[0,9]$ such that $w_k \neq 0$ and, for all $i\in[0,k-1]$, 
\[
w_{i+1}w_i \neq 
\begin{cases} 
12 \text{ or } 89, & \text{if $i$ is even};\\
89, & \text{if $i$ is odd}.
\end{cases}
\]
The first values of $(|L_5 \cap [0,9]^n|)_{n\ge 0}$ are $1, 9, 88, 872, 8534, 84566, 827622$, which does not seem to be in the OEIS.
Computing further values of the latter sequence seems difficult with a classical computer set-up.
%
%
%
%
Following either approach (\cref{thm: Drmota and Grabner} with base $100$ or~\cite[Theorem 1]{Heuberger-Krenn-2020} with base $10$) combined with~\cref{lem:Titchmarsh} yields the following result.

\begin{theorem}
\label{thm:abscissa-convergence-blocks-12-89-even-12-odd}
    Let $\lambda=\sqrt{\frac{1}{2} \left(97+\sqrt{9401}\right)}$.
    Then the abscissa of convergence of the restricted Dirichlet 
    series $F_{L_5}(z)$ is $\frac{\log \lambda}{\log 10}$.
\end{theorem}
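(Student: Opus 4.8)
The plan is to mirror the proof of \cref{thm:abscissa-convergence-blocks-12-21}: reduce the abscissa of convergence to the growth of a word-counting function by means of \cref{lem:Titchmarsh,le:useful}, and extract that growth from a transfer matrix. The genuinely new feature here is that $89$ is forbidden at every position while $12$ is forbidden only at even positions, so no single-step constant-coefficient recurrence is available. The natural remedy is to read two base-$10$ digits at a time, i.e., to work in base $100=10^2$, so that the position parity is constant inside each super-digit.

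Concretely, for $n\ge 0$ I would let $b_n$ count the length-$n$ words over $[0,9]$ that satisfy the avoidance constraints of $L_5$ but \emph{without} the leading-zero condition, and set $a_n=|L_5\cap[0,9]^n|$. Classifying each such word by its most significant digit, introduce the counts $f_n,g_n,h_n$ of words whose top digit equals $9$, equals $2$, and lies in $[0,9]\setminus\{2,9\}$ respectively, so that $b_n=f_n+g_n+h_n$. Prepending one digit $w_n$ over the current top $w_{n-1}$, the only forbidden pairs $(w_n,w_{n-1})$ are $(8,9)$ (always) and $(1,2)$ (only when $n-1$ is even); since neither $9$ nor $2$ ever heads a forbidden pair, one gets $f_{n+1}=g_{n+1}=b_n$, whence $f_n=g_n$ for $n\ge 1$. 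Writing the state as $(f_n,h_n)$, the one-step transition is thus governed by the matrix
\[
T_{\mathrm{ev}}=\begin{pmatrix}2&1\\14&8\end{pmatrix}\quad\text{or}\quad T_{\mathrm{od}}=\begin{pmatrix}2&1\\15&8\end{pmatrix},
\]
according to whether the new transition position $n-1$ is even (both blocks banned) or odd (only $89$ banned).

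The key step is to form the two-step (base-$100$) transfer matrix $T_{\mathrm{ev}}T_{\mathrm{od}}=\left(\begin{smallmatrix}19&10\\148&78\end{smallmatrix}\right)$, read off $\operatorname{trace}=97$ and $\det=2$, and conclude that its characteristic polynomial is $x^2-97x+2$, whose dominant root is $\beta=\tfrac12(97+\sqrt{9401})=\lambda^2$. Hence $(b_{2m})_m$ satisfies $u_{m+2}=97u_{m+1}-2u_m$ and grows like $\beta^m$. A length-$\ell$ word of $L_5$ begins with $0$ exactly when it is a length-$(\ell-1)$ admissible word with a $0$ prepended, so $a_\ell=b_\ell-b_{\ell-1}$, and the summatory function of the characteristic sequence $(s_n)_{n\ge0}$ of $L_5$ satisfies $A(10^n)=\sum_{\ell=1}^n a_\ell+s_{10^n}=b_n$. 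In particular $A(100^k)=b_{2k}=\Theta(\beta^k)=\beta^{k(1+o(1))}$. Applying \cref{le:useful} in base $100$ (with $\lambda$ there replaced by $\beta$) gives $\frac{\log A(n)}{\log n}\to\frac{\log\beta}{\log 100}=\frac{\log\lambda}{\log 10}$, and \cref{lem:Titchmarsh} identifies this limit with the abscissa of convergence of $F_{L_5}(z)$.

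The main obstacle is the parity bookkeeping: one must be sure that the right pairs are forbidden at even versus odd transition positions and that grouping digits two at a time really yields a \emph{constant} $2\times2$ matrix $T_{\mathrm{ev}}T_{\mathrm{od}}$; once this is set up, the eigenvalue computation is routine. An alternative, as the sentence preceding the theorem indicates, is to observe that $(s_n)_{n\ge0}$ is $10$-automatic (a DFAO need only track the parity of the current position together with the last digit), pass to a base-$100$ linear representation, and invoke \cref{thm: Drmota and Grabner} after checking that $M_{s,100}$ has a unique simple dominant eigenvalue $\beta$ exceeding the norms of the $M_i'$ and that the resulting periodic function $\Phi$ is bounded away from $0$; this produces the same growth exponent $\log_{100}\beta=\frac{\log\lambda}{\log 10}$.
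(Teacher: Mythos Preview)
Your argument is correct. The transfer-matrix computation checks out: with $f_n=g_n$ for $n\ge 1$ and $b_n=2f_n+h_n$, the transitions you write down are right, the product $T_{\mathrm{ev}}T_{\mathrm{od}}$ has trace $97$ and determinant $2$, and the telescoping $\sum_{\ell\le n}a_\ell=b_n-1$ together with $s_{10^n}=1$ gives $A(10^n)=b_n$. Since $T_{\mathrm{ev}}T_{\mathrm{od}}$ has strictly positive entries and the initial vector $(f_2,h_2)=(10,78)$ is positive, Perron--Frobenius yields $b_{2k}\sim C\beta^k$, so \cref{le:useful} and \cref{lem:Titchmarsh} apply as you claim.

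The paper does not spell out a proof; it simply points to the general machinery of \cref{thm: Drmota and Grabner} in base $100$ (or Heuberger--Krenn in base $10$) applied to the automatic characteristic sequence of $L_5$, followed by \cref{lem:Titchmarsh}. Your route shares the same core idea---pass to base $100$ so that the parity of the transition position becomes constant within each super-digit and read off the dominant eigenvalue---but your execution is more elementary and self-contained: a hand-built $2\times2$ transfer matrix in the spirit of \cref{sec:myriad}, avoiding any appeal to the Drmota--Grabner or Heuberger--Krenn theorems. This buys you an explicit, verifiable computation (indeed your $b_n$ reproduce the paper's values $a_n=b_n-b_{n-1}=1,9,88,872,8534,\dots$), at the cost of a small amount of bookkeeping. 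You already note the paper's approach as an alternative in your final paragraph, so you have both routes covered.
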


As already mentioned, the languages studied in this section have a characteristic sequence (in base $b$) that is $b$-automatic.
Similarly, given a subset $S\subseteq \mathbb{N}$, its \emph{characteristic sequence} $(s_n)_{n\ge 0}$ is defined by $s_n = 1$ if $n\in S$, $s_n=0$ otherwise.
When the latter is automatic, we obtain the following general result about the abscissa of convergence of the corresponding Dirichlet series.

\begin{theorem}
\label{thm: general abscissa of convergence automatic sec}
Let $S$ be a non-empty subset of integers such that its characteristic sequence $(s_n)_{n\ge 0}$ is $b$-automatic for some integer $b\ge 2$.
Let $L_S$ denote the language of $b$-representations of elements of $S$, i.e., $L_S=\{\rep_b(n) \mid n\in S\}$.
Then the abscissa of convergence of the corresponding Dirichlet series $F_{L_S}(z)$ is $0$, $1$ or a real number $s\in (0,1)$ such that $s=\frac{\log r}{\log b}$ for some integer $r>1$.
In particular, $r$ and $s$ can be explicitly computed. Furthermore, $s$ is always the quotient of two logarithms of integers.
\end{theorem}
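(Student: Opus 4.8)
The plan is to push everything through the summatory function $A(n)=\sum_{1\le i\le n}s_i=|S\cap[1,n]|$ and then read off the abscissa via \cref{lem:Titchmarsh} and \cref{le:useful}; throughout I assume $S$ is infinite, the finite case being trivial. Since $\seq{s}{n}{0}$ is $b$-automatic it is $b$-regular, and I would take the linear representation coming directly from a DFAO generating it whose output is unaffected by leading zeroes (as in \cref{sec:alternative approach}): the matrices $M_0,\dots,M_{b-1}$ are the $0$--$1$ transition matrices of the automaton, $V$ selects the initial state, and $W$ marks the states of output $1$. Two structural facts then drive the argument. First, because the automaton is deterministic, each digit sends a state to exactly one state, so the sum matrix $M_{s,b}=M_0+\cdots+M_{b-1}$ is a non-negative integer matrix all of whose rows sum to $b$; consequently its spectral radius is exactly $b$. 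Second, since leading zeroes may be prepended at will, the count over a complete range collapses to a single matrix power,
\[
A(b^k)=\sum_{0\le i<b^k}s_i=V\,M_{s,b}^{\,k}\,W
\]
up to the harmless contribution of $s_0$, turning the asymptotic question into the analysis of $M_{s,b}^{\,k}$.

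Next I would invoke Perron--Frobenius theory for the non-negative matrix $M_{s,b}$. Expanding $V M_{s,b}^{\,k} W$ over generalized eigenspaces gives $A(b^k)=\sum_i P_i(k)\lambda_i^k$, in which an eigenvalue contributes with a nonzero polynomial $P_i$ only if it is both reachable from the initial state through $V$ and co-reachable to an accepting state through $W$. Writing $\lambda$ for the largest such eigenvalue --- a Perron root of a non-negative block, hence real with $1\le\lambda\le b$ --- one obtains $A(b^k)=\lambda^{k(1+o(1))}$ when $\lambda>1$, which is exactly the hypothesis of \cref{le:useful}. In favourable situations this step can instead be handled by \cref{thm: Drmota and Grabner} or by the Heuberger--Krenn results used in \cref{sec:alternative approach}. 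Feeding the estimate into \cref{le:useful} (and, in the degenerate case $\lambda=1$, where $A(n)=\Theta((\log n)^m)$ and $\tfrac{\log A(n)}{\log n}\to 0$, into \cref{lem:Titchmarsh} directly) shows that the abscissa of convergence equals $\log_b\lambda$. This yields the trichotomy at once: $\lambda=1$ gives abscissa $0$, $\lambda=b$ gives abscissa $1$, and $1<\lambda<b$ gives $s=\frac{\log\lambda}{\log b}\in(0,1)$, with $\lambda$, and hence $s$, effectively computable from the matrix.

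The hard part will be the integrality claim, that the governing eigenvalue $\lambda$ is a rational integer $r$, so that $s$ is genuinely a quotient $\frac{\log r}{\log b}$ of logarithms of integers. A priori $\lambda$ is only an algebraic integer, the Perron root of a non-negative integer matrix, and the block-avoidance languages of \cref{sec:myriad} --- whose dominant roots are $5+2\sqrt6$ and $\frac{3}{2}(3+\sqrt{13})$ --- show that such a Perron root can be irrational, so the integrality cannot follow from the non-negative-matrix structure alone. The real work must therefore go into the dominant contributing strongly connected component: I would restrict $M_{s,b}$ to that component and try to identify its Perron root with an honest count of admissible transitions inside it, exploiting the constant-row-sum structure inherited from determinism together with the $0$--$1$ nature of $W$, exactly as the pure digit-restriction examples identify $\lambda$ with the integer $b-|D|$. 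Making this identification precise --- equivalently, isolating the hypothesis on the dominant component under which its Perron root is forced to be integral --- is the crux on which the full strength of the statement rests, and the step I expect to resist a routine argument.
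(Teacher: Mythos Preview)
Your route through the linear representation and a Perron--Frobenius decomposition of $V M_{s,b}^{\,k} W$ is correct and delivers the same trichotomy, but the paper does not redo this analysis: it simply invokes Cobham's 1972 results (Theorems~11 and~12 of \cite{Cobham-1972}), which already assert that the summatory function of a $b$-automatic $0$--$1$ sequence satisfies one of $A(n)=\Theta(n)$, $A(n)=\Theta\bigl(n^s(\log n)^p\bigr)$ with $s\in(0,1)$, or $A(n)\sim c(\log_b n)^p$, and then reads off the abscissa via \cref{lem:Titchmarsh}. What you reconstruct by hand is essentially the content of those theorems, so the two arguments agree in substance; the paper just cites the black box rather than opening it.

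On the integrality of $r$ you are right to balk, and your diagnosis is sharper than the paper's own proof. The paper's sole justification is that ``examining the proof of case~(ii) in \cite[Theorem~12]{Cobham-1972}'' identifies $r$ as the maximum modulus of the eigenvalues of an explicit integer matrix attached to the sequence---which makes $r$ an \emph{algebraic} integer, not a rational integer. Your observation about the block-avoidance languages of \cref{sec:myriad} is decisive: the characteristic sequence of $L_1$ is shown in \cref{sec:alternative approach} to be $10$-automatic, yet \cref{thm:abscissa-convergence-blocks-12-89} gives abscissa $\log_{10}(5+2\sqrt 6)$, and $5+2\sqrt 6\notin\mathbb{Z}$. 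So the step you flag as ``the crux on which the full strength of the statement rests'' is not a gap in your argument but a tension in the statement itself; neither your matrix analysis nor the paper's appeal to Cobham produces a rational integer $r$ in that example, and no amount of work on the dominant strongly connected component will manufacture one.
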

\begin{proof}
Let $(A(n))_{n\ge 0}$ denote the summatory function of the sequence $(s_n)_{n\ge 0}$.
Combining the statements of~\cite[Theorems~11 and~12]{Cobham-1972} (for which we take the letter $a=1$), the summatory function $(A(n))_{n\ge 0}$ satisfies one of the following cases:
\begin{enumerate}
    \item[1)] $A(n) = \Theta(n)$;
    \item[2)] $A(n) = \Theta( n^s (\log n)^p)$ for some real $s\in(0,1)$ and some integer $p\ge 0$;
    \item[3)] $A(n) \sim c (\frac{\log n}{\log b})^p$ for some rational $c>0$ and some integer $p\ge 0$.
\end{enumerate}
Indeed, cases called $(i)$ and $(ii)$ in~\cite[Theorem~11]{Cobham-1972} lead to the first case above.
The last two cases are handled with~\cite[Theorem~12]{Cobham-1972} (note that, as we assume $S$ to be non-empty, the letter $a=1$ appears in $(s_n)_{n\ge 0}$, so we may discard case called $(iii)$ in~\cite[Theorem~12]{Cobham-1972}).
Now~\cref{lem:Titchmarsh} implies that the abscissa of convergence of $F_{L_S}(z)$ is $1$, $s\in(0,1)$, or $0$ depending on the case.
Now we refine the second case.
Indeed, examining the proof of case $(ii)$ in~\cite[Theorem~12]{Cobham-1972} shows that $s= \frac{\log r}{\log b}\in(0,1)$, where $r>1$ is the maximum modulus among all eigenvalues of some explicit matrix related to $(s_n)_{n\ge 0}$.
\qed
\end{proof}

\begin{example}
We obtain an alternative proof of~\cref{cor:avoid only one letter,cor:avoid only one letter bis} for $b\ge 3$ since, in this case, $r=b-1$ (also see~\cite[Example~5, p. 189]{Cobham-1972} for $b=3$).

If we consider the regular language $L_{P2}$ of base-$2$ representations of powers of $2$, then the corresponding summatory function satisfies the third item in the proof of~\cref{thm: general abscissa of convergence automatic sec} with $b=2$ and $p=1$~\cite[Example~3, p. 189]{Cobham-1972}.
The abscissa of convergence of the corresponding Dirichlet series $F_{L_{P2}}(z)$ is then equal to $0$.
\end{example}

We finish this section with the following observation on continuation of Dirichlet series.
Let $\seq{s}{n}{0}$ be a $b$-regular sequence whose linear representation is given by the vectors $V,W$ and the matrices $M_0,\ldots,M_{b-1}$.
Write $M_{s,b}= \sum_{i=0}^{b-1} M_i$.
Then one of the main results\footnote{In~\cite{AMFP}, the result is proved explicitly for $b$-automatic sequences and only stated for $b$-regular sequences. Also see~\cite[Theorem~3.1]{Coons2010}.} in~\cite{AMFP} asserts that the Dirichlet series $F_s(z)$ has a meromorphic continuation to the whole complex plane with \emph{candidate} poles located at
\[
z_{n,\ell}(\gamma) = \frac{\log \gamma}{\log b} - \ell + i \frac{2\pi n}{\log b}
\]
for all integers $n\in\mathbb{Z}$ and $\ell\ge 1$ and for each eigenvalue $\gamma$ of the sum matrix $M_{s,b}$ (note that $\log$ is the complex branch of the logarithm).
Proving that $z_{n,\ell}(\gamma)$ is actually a pole might turn out to be complicated.
However, in the following case, we are in power to say more.
Assume furthermore that $M_{s,b}$ is a primitive integer matrix and that the sequence $\seq{s}{n}{0}$ takes non-negative values.
If we let $\rho = \rho(M_{a,b})$ be the spectral radius of $M_{s,b}$, then the reasoning exposed in~\cite[Section~2]{Coons-Lind-2024} tells us that $\frac{\log \rho}{\log b}$ is a simple pole.

We may apply this observation to the examples of the present section.

\begin{example}
\label{ex: simple pole for all distinct blocks}
Considering the characteristic sequence $\seq{s}{n}{0}$ of the language $L_1$ from~\cref{sec:disjoint blocks} viewed as a $100$-regular sequence, the corresponding matrix $M_{s,100}$
has integer coefficients, is primitive, and has eigenvalues $\beta=49 + 20\sqrt{6}$ and $49 - 20 \sqrt{6}$.
In particular, $F_{L_1}(z)$ has
\[
\frac{\log \rho(M_{s,100})}{\log 100} = \frac{\log \beta }{\log 100}=\frac{\log (5 + 2\sqrt{6})}{\log 10}
\]
as a simple pole.
We leave as an open problem to characterize the other poles of $F_{L_1}(z)$.
In addition, we ask: what are the poles of meromorphic continuations of Dirichlet series associated with block-avoiding binary automatic sequences?
The Dirichlet series having as coefficients the $(\pm 1)$-Thue--Morse sequence has no poles at all~\cite{Allouche-Cohen-1985}, while the Dirichlet series having as coefficients the $\{0,1\}$-Thue--Morse sequence has a single pole at $1$ which is simple (like the Riemann zeta function): namely, if $(t_n)_{n \geq 0}$ is the $\{0,1\}$-Thue--Morse sequence, then $(1 - 2 t_n)_{n \geq 0}$ is the $(\pm 1)$-Thue--Morse sequence.
\end{example}

\section{An unusual example}\label{sec:unusual}

In this section, we consider a last example which, as we will see, does not fit into the setting of $b$-regular sequences.
Let ${\bf t} = (t_n)_{n \geq 0} = 01101001\cdots$ be the $2$-automatic Thue-Morse sequence.
We say that an integer $n$ is \emph{evil} if $t_n = 0$.
A non-evil number is called \emph{odious}.
The first few evil numbers are 
\[
0, 3, 5, 6, 9, 10, 12, 15, 17, 18, 20;
\]
see also~\cite[A000069,A001969]{Sloane}.
Both sequences are $2$-regular~\cite{Allouche-Shallit-2003-ring2,Shallit-Walnut}.

Let $L_J$ be the set of binary words
of the form $w_k w_{k-1} \cdots w_0$ with the property that
there is no factor $10$ where the $0$ appears in an evil position $i$ for some $0\le i \le k$.
(Note that leading zeroes are allowed.)
For example, $10111\not\in L_J$ because the word $10$ appears
as $w_4 w_3$ and $3$ is evil.

Let us find a recurrence for the number $u_n= |L_J\cap \{0,1\}^n|$ of length-$n$ words
in $L_J$.
Here the Goulden-Jackson cluster method does not seem to be applicable.

\begin{lemma}
\label{lem:rec-for-bs}
We have $u_0 = 1$, $u_1 = 2$, $u_2 = 3$, and for all $n\ge 3$,
\begin{equation}
u_n = \begin{cases}
	2 u_{n-1}, & \text{if $t_{n-2} = 1$}; \\
	u_{n-1} + u_{n-3}, & \text{if $t_{n-2} = t_{n-3} = 0$}; \\
	u_{n-1} + u_{n-2}, & \text{if $t_{n-2} = 0$ and $t_{n-3} = 1$.}
	\end{cases}
\label{eq:foo}
\end{equation}
\end{lemma}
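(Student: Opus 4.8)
The plan is to build each length-$n$ word in $L_J$ by prepending its most significant digit $w_{n-1}$ to a length-$(n-1)$ word $w_{n-2}\cdots w_0$, while tracking one auxiliary statistic: for each $m$, let $a_m$ be the number of words in $L_J\cap\{0,1\}^m$ whose most significant digit equals $1$, so that $u_m = a_m + (u_m - a_m)$, where the complementary count is the number with leading digit $0$. The crucial observation is that prepending $w_{n-1}$ introduces exactly one new constraint, concerning the pair $w_{n-1}w_{n-2}$ at position $n-2$, and that this constraint is active precisely when $n-2$ is evil, i.e. when $t_{n-2}=0$. This is why the recurrence branches on $t_{n-2}$, and the base cases $u_0=1$, $u_1=2$, $u_2=3$ follow by direct inspection (for $n\le 2$ the only possibly active constraint is at position $0$, where $t_0=0$ forbids $w_1 w_0 = 10$).

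First I would dispose of the case $t_{n-2}=1$: then position $n-2$ imposes no condition, $w_{n-1}$ is free, and every length-$(n-1)$ word extends in two ways, giving $u_n = 2u_{n-1}$. When $t_{n-2}=0$, the forbidden pattern $w_{n-1}w_{n-2}=10$ permits a free choice of $w_{n-1}$ whenever $w_{n-2}=1$, but forces $w_{n-1}=0$ whenever $w_{n-2}=0$. Summing over the two types of underlying length-$(n-1)$ words yields $u_n = 2a_{n-1} + (u_{n-1}-a_{n-1}) = u_{n-1} + a_{n-1}$, so everything reduces to evaluating $a_{n-1}$.

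To compute $a_{n-1}$ I would fix $w_{n-2}=1$ and peel it off. The only new constraint on the tail $w_{n-3}\cdots w_0$ sits at position $n-3$, and since $w_{n-2}=1$ it forbids $w_{n-3}=0$ exactly when $n-3$ is evil. Hence if $t_{n-3}=1$ there is no further condition and $a_{n-1}=u_{n-2}$, giving $u_n = u_{n-1}+u_{n-2}$; whereas if $t_{n-3}=0$ we are forced to take $w_{n-3}=1$, so $a_{n-1}=a_{n-2}$. In this last subcase I would peel once more: fixing $w_{n-3}=1$, the new constraint sits at position $n-4$ and forbids $w_{n-4}=0$ only if $t_{n-4}=0$. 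Here the arithmetic structure of Thue–Morse enters decisively: since $t_{n-3}=t_{n-2}=0$ and the Thue–Morse sequence is cube-free (it contains no factor $000$), we must have $t_{n-4}=1$; thus no constraint survives at position $n-4$ and $a_{n-2}=u_{n-3}$, so $a_{n-1}=u_{n-3}$ and $u_n = u_{n-1}+u_{n-3}$. All indices remain non-negative because the earliest occurrence of $00$ in Thue–Morse is at positions $5,6$, forcing $n\ge 8$ in this branch.

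The main obstacle is precisely this final subcase: without the cube-freeness of Thue–Morse the peeling of $w_{n-3}=1$ would trigger a further constraint, and the recursion would not close into a finite linear recurrence. Recognizing that $t_{n-3}=t_{n-2}=0$ \emph{forces} $t_{n-4}=1$ is the structural heart of the argument; the rest is careful bookkeeping of the auxiliary quantity $a_m$ and of the index shifts.
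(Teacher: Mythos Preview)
Your proof is correct and follows essentially the same strategy as the paper's: decompose $L_{J,n}$ according to the leading digit, and when $t_{n-2}=0$ peel off forced leading $1$'s until an odious position is reached. The paper writes this directly as set equalities $L_{J,n}=0L_{J,n-1}\cup 11L_{J,n-2}$ (Case~3) and $L_{J,n}=0L_{J,n-1}\cup 111L_{J,n-3}$ (Case~2), while you route the same computation through the auxiliary count $a_m$; in fact you are slightly more careful, since you make explicit that the inclusion $111L_{J,n-3}\subseteq L_{J,n}$ relies on $t_{n-4}=1$ via cube-freeness, a point the paper only invokes in the proof of the subsequent lemma.
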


\begin{proof}
For $n\ge 0$, let $L_{J,n}= L_J\cap \{0,1\}^n$ denote the set of all words
$w_{n-1} \cdots w_0$ having no factor $10$ where the $0$
appears in an evil position, so that $u_n = |L_{J,n}|$.
Suppose $n \geq 3$.

\textbf{Case 1}: Assume that $t_{n-2} = 1$.  Then
$L_{J,n} = 0 L_{J,n-1} \, \cup \, 1 L_{J,n-1}$, because
appending either $0$ or $1$ to a word in $L_{J,n-1}$
cannot introduce a forbidden word.
This gives the first equality in~\eqref{eq:foo}.

\textbf{Case 2}:  Assume that $t_{n-2} = t_{n-3} = 0$.   Then
$L_{J,n} = 0 L_{J,n-1} \, \cup \, 111 L_{J,n-3}$.  The
first term in~\eqref{eq:foo} arises because
appending a $0$ to a word in $L_{J,n-1}$ cannot
introduce a forbidden word, and the second term
arises because the first three symbols of a word
in $L_{J,n}$ cannot be $100$, $101$, or $110$.
This gives the second equality in~\eqref{eq:foo}.

\textbf{Case 3}: Assume that $t_{n-2} = 0$ and $t_{n-3} = 1$.  Then
$L_{J,n} = 0 L_{J,n-1} \, \cup \, 11 L_{J,n-2}$.  The situation is like the
previous case, except now the first two symbols
of a word in $L_{J,n}$ cannot be $10$.
This ends the proof. \qed
\end{proof}

The first few values of the sequence $(u_n)_{n\ge 0}$ from~\cref{lem:rec-for-bs} are given in~\cref{tab:bn}.

\begin{table}
\begin{center}
\begin{tabular}{c|ccccccccccccccccccccc}
$n$ & 0 & 1 & 2 & 3 & 4 & 5 & 6 & 7 & 8 & 9 & 10 & 11 & 12 & 13 & 14 & 15 & 16 & 17 & 18 & 19 & 20\\
\hline
$u_n$ & 1 & 2 & 3 & 6 & 12 & 18 & 36 & 54 & 72 & 144 & 288 & 432 & 576 & 1152 & 1728 & 3456 & 6912 & 10368 & 20736 & 31104 & 41472
\end{tabular}
\end{center}
\caption{The number $u_n$ of length-$n$ words in $L_J$, i.e., words with the property that there is no factor $10$ where the $0$ appears in an evil position.}
\label{tab:bn}
\end{table}

We expect $u_n$ to behave roughly like $\alpha^n$, for some $\alpha$ with $1 < \alpha < 2$, because
forbidding $10$ with $0$ in an evil position is more or less like forbidding $10$ ``half the time''.  This intuition is confirmed in the following result.

\begin{theorem}
\label{thm: behavior of bn}
There are constants $c_1, c_2$ such that
$n^{c_1} \alpha^n \leq u_n \leq n^{c_2} \alpha^n$,
where $\alpha = \sqrt[6]{24} \approx 1.69838$.
\end{theorem}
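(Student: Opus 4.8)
The plan is to track the consecutive ratios $r_n := u_n/u_{n-1}$ and to show that, for all large $n$, they take only the three values $2, \tfrac32, \tfrac43$, each dictated by the local shape of the Thue--Morse word at positions $n-2, n-3$. First I would prove, using that $\mathbf{t}$ is overlap-free (in particular it contains no factor $000$), that for all $n \ge 5$
\[
r_n = 2 \text{ if } t_{n-2}=1, \qquad r_n = \tfrac32 \text{ if } (t_{n-2},t_{n-3})=(0,1), \qquad r_n = \tfrac43 \text{ if } (t_{n-2},t_{n-3})=(0,0).
\]
The first case is immediate from~\eqref{eq:foo}. In the case $(0,1)$, \eqref{eq:foo} gives $r_n = 1 + 1/r_{n-1}$, and $t_{n-3}=1$ forces $r_{n-1}=2$ by the first case, so $r_n=\tfrac32$. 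In the case $(0,0)$, overlap-freeness forces $t_{n-4}=1$, whence $r_{n-2}=2$ and $r_{n-1}=1+1/r_{n-2}=\tfrac32$, so that $r_n = 1 + 1/(r_{n-1}r_{n-2}) = \tfrac43$. The finitely many initial indices then contribute only an additive $O(1)$ to $\log u_n$.

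Taking logarithms and summing telescopically, this yields
\[
\log u_n = N_1(n)\log 2 + N_{10}(n)\log\tfrac32 + N_{00}(n)\log\tfrac43 + O(1),
\]
where $N_1(n)$, $N_{10}(n)$, $N_{00}(n)$ count, respectively, the letters $1$, the factors $10$, and the factors $00$ in the length-$n$ prefix of $\mathbf{t}$ (the $O(1)$ absorbing both the initial transient and the $O(1)$ boundary correction from the index shift $j=i-2$). Using the classical frequencies $\tfrac12,\tfrac13,\tfrac16$ of these three patterns in $\mathbf{t}$, together with the identity $\tfrac12\log2+\tfrac13\log\tfrac32+\tfrac16\log\tfrac43 = \tfrac16\log 24 = \log\alpha$, the main term is exactly $n\log\alpha$, so everything reduces to controlling the three discrepancies.

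For $N_1$ I would invoke the exact balance of Thue--Morse: writing $T(n)=\sum_{i<n}(1-2t_i)$, the digit identities $t_{2m}=t_m$ and $t_{2m+1}=1-t_m$ give $T(2n)=0$ and $T(2n+1)=\pm1$, hence $N_1(n)=\tfrac{n}{2}+O(1)$. The same identities show that $00$ occurs only at odd positions $2m+1$, and precisely when $t_m t_{m+1}=10$, while $10$ occurs at $2m$ iff $t_m=1$ and at $2m+1$ iff $t_m t_{m+1}=00$. This produces the coupled self-similar recursions $N_{00}(n)=N_{10}(n/2)+O(1)$ and $N_{10}(n)=N_1(n/2)+N_{00}(n/2)+O(1)=\tfrac{n}{4}+N_{10}(n/4)+O(1)$. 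Telescoping the latter down through its $O(\log n)$ scales gives $N_{10}(n)=\tfrac{n}{3}+O(\log n)$ and then $N_{00}(n)=\tfrac{n}{6}+O(\log n)$. Substituting, $\log u_n - n\log\alpha = O(\log n)$, i.e. $|\log u_n - n\log\alpha|\le C\log n$ for large $n$; taking $c_2=C$ and $c_1=-C$ (and enlarging the constants to cover the finitely many small $n$) yields $n^{c_1}\alpha^n\le u_n\le n^{c_2}\alpha^n$.

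The hard part will be this final discrepancy analysis rather than the ratio reduction, which is an elementary finite check plus induction, or the frequencies, which are standard. The delicate point is to track the floor/ceiling boundary terms carefully through the self-similar recursions for $N_{10}$ and $N_{00}$, so that the accumulated $O(1)$ errors over the $\log_4 n$ levels really sum to $O(\log n)$ and not something larger. Alternatively, one could recast the whole computation as a product of three transfer matrices indexed by the letters of $\mathbf{t}$ and appeal to the summatory-function asymptotics for $b$-automatic sequences cited earlier (e.g.~\cref{thm: Drmota and Grabner}), but the elementary self-similar recursion seems more transparent here, since $\seq{u}{n}{0}$ itself is not $b$-regular.
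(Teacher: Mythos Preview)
Your proposal is correct and follows essentially the same route as the paper: both first establish the ratio lemma $r_n\in\{2,\tfrac32,\tfrac43\}$ according to the local pattern $(t_{n-2},t_{n-3})$ (using cube-freeness of $\mathbf t$ in the $(0,0)$ case), then reduce the problem to the discrepancies of the pattern counts $N_1,N_{10},N_{00}$ in $\mathbf t$, and finally show these are $n/2+O(1)$, $n/3+O(\log n)$, $n/6+O(\log n)$. The only cosmetic differences are that the paper packages the telescoping product as the closed form $u_n=2^{e_1+2e_{00}-e_{10}}3^{1+e_{10}-e_{00}}$ rather than as a sum of logarithms, and it bounds $e_{00}(n)$ via the exact values $e_{00}(2^i)=(2^i-3-(-1)^i)/6$ summed along the binary expansion of $n$, whereas you unfold the self-similar recursion $N_{10}(n)=\tfrac n4+N_{10}(n/4)+O(1)$; both yield the same $O(\log n)$ control.
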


To prove this, we require the next lemma.

\begin{lemma}
For $n \geq 3$, we have
\begin{equation}
u_n = \begin{cases}
	2 u_{n-1}, & \text{if $t_{n-2} = 1$}; \\
	\frac{4}{3} u_{n-1}, & \text{if $t_{n-2} = t_{n-3} = 0$}; \\
	\frac{3}{2} u_{n-1}, & \text{if $t_{n-2} = 0$ and $t_{n-3} = 1$.}
	\end{cases}
\label{eq:foo2}
\end{equation}
\end{lemma}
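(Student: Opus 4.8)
The plan is to derive \eqref{eq:foo2} from the recurrence \eqref{eq:foo} of \cref{lem:rec-for-bs} purely by re-indexing, using the single structural fact that the Thue--Morse sequence is overlap-free, hence cube-free, so it contains no factor $000$ (see \cite{Allouche-Shallit-2003}). The first branch of \eqref{eq:foo2} is identical to the first branch of \eqref{eq:foo}, so there is nothing to prove there. For the other two branches, the idea is to show that the ``extra'' summand appearing in \eqref{eq:foo} (namely $u_{n-2}$ in the third branch, $u_{n-3}$ in the second) is a fixed rational multiple of $u_{n-1}$, obtained by invoking \eqref{eq:foo} again at a smaller index.

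Before starting, I would record one bookkeeping fact to guarantee that all the shifted recurrences below are in range: whenever $t_{n-2}=0$ and $n\ge 3$, one automatically has $n\ge 5$, since $t_1=t_2=1$ forces the first zero among $t_1,t_2,\dots$ to occur at index $3$. Hence in the second and third branches the indices $n-1$ and $n-2$, at which \eqref{eq:foo} is invoked, are both at least $3$, so every such invocation is legitimate. (The base cases $n\in\{3,4\}$ fall under the first branch, where $t_{n-2}=t_1=t_2=1$, so no out-of-range recurrence is ever applied.)

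For the third branch, assume $t_{n-2}=0$ and $t_{n-3}=1$. Applying \eqref{eq:foo} at index $n-1$, and noting that $t_{(n-1)-2}=t_{n-3}=1$, the first branch of \eqref{eq:foo} gives $u_{n-1}=2u_{n-2}$, i.e. $u_{n-2}=\tfrac12 u_{n-1}$. Substituting this into the third branch of \eqref{eq:foo}, namely $u_n=u_{n-1}+u_{n-2}$, yields $u_n=\tfrac32 u_{n-1}$. For the second branch, assume $t_{n-2}=t_{n-3}=0$. Here is the crux: since the factor $t_{n-4}t_{n-3}t_{n-2}$ cannot equal $000$ by cube-freeness, and since $t_{n-3}=t_{n-2}=0$, we are forced to have $t_{n-4}=1$. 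Now apply \eqref{eq:foo} at index $n-2$: as $t_{(n-2)-2}=t_{n-4}=1$, its first branch gives $u_{n-2}=2u_{n-3}$. Then apply \eqref{eq:foo} at index $n-1$: since $t_{(n-1)-2}=t_{n-3}=0$ and $t_{(n-1)-3}=t_{n-4}=1$, its third branch gives $u_{n-1}=u_{n-2}+u_{n-3}=2u_{n-3}+u_{n-3}=3u_{n-3}$, i.e. $u_{n-3}=\tfrac13 u_{n-1}$. Substituting into the second branch of \eqref{eq:foo}, namely $u_n=u_{n-1}+u_{n-3}$, gives $u_n=\tfrac43 u_{n-1}$.

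The only genuinely delicate point is the cube-free argument that pins down $t_{n-4}=1$ in the second branch; once that is in hand, the rest is mechanical re-indexing of \eqref{eq:foo}. I would therefore double-check that the positions $n-4,n-3,n-2$ form an honest factor (guaranteed by $n\ge 5$) so that cube-freeness applies, and verify the first few computed values in \cref{tab:bn} against \eqref{eq:foo2} as a sanity check.
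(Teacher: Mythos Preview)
Your proof is correct and follows essentially the same route as the paper: both arguments hinge on the cube-freeness of $\mathbf{t}$ to force $t_{n-4}=1$ in the second branch, and then combine the recurrence at indices $n$, $n-1$, $n-2$ to obtain the claimed ratios. The paper phrases this as an induction on \eqref{eq:foo2} (invoking the induction hypothesis for $u_{n-2}=2u_{n-3}$ and $u_{n-1}=2u_{n-2}$), whereas you observe---correctly---that each such use of the ``induction hypothesis'' is an instance of the first branch, which coincides with \eqref{eq:foo} itself, so no induction is actually needed; your explicit bookkeeping that $t_{n-2}=0$ forces $n\ge 5$ also makes the range checks cleaner than the paper's ``base cases are left to the reader.''
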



\begin{proof}
The proof is by induction on $n$.  The base cases are left
to the reader.

\textbf{Case 1}: If $t_{n-2} = 1$, then the first equality in~\eqref{eq:foo2} follows immediately
from~\eqref{eq:foo}.

\textbf{Case 2}: Assume that $t_{n-2} = t_{n-3} = 0$.   Since $\bf t$
avoids cubes, we must have $t_{n-4} = 1$.
Then~\eqref{eq:foo} gives $u_n = u_{n-1} + u_{n-3}$ and $u_{n-1} = u_{n-2} + u_{n-3}$, and the induction hypothesis gives $u_{n-2} = 2 u_{n-3}$. Adding these last
two equations and canceling $u_{n-2}$ gives
$u_{n-1} = 3u_{n-3}$. The second equality in~\eqref{eq:foo2} now follows.

\textbf{Case 3}: Assume that $t_{n-2} = 0$ and $t_{n-3} = 1$. Then~\eqref{eq:foo} gives $u_n = u_{n-1} + u_{n-2}$ and the induction hypothesis gives $u_{n-1} = 2 u_{n-2}$.
The result now follows. \qed
\end{proof}

\begin{proof}[of~\cref{thm: behavior of bn}]
For a block (or pattern) $p \in \{0,1\}^*$ define $e_p (n)$ to be the number
of (possibly overlapping) occurrences of $p$ in ${\bf t}[0..n-1]$.
A simple induction, using \eqref{eq:foo2}, now gives
\begin{equation}
    u_n = 2^{e_1(n-1) + 2e_{00} (n-1) - e_{10} (n-1)} 3^{1+e_{10}(n-1) - e_{00} (n-1)}
    \label{eq:foo3}
\end{equation}
for $n \geq 2$.
For example, if $t_{n-2} = t_{n-3} = 0$, then ${\bf t}[0..n-2]={\bf t}[0..n-4]00$, so $e_1(n-1)=e_1(n-2)$, $e_{10} (n-1)=e_{10} (n-2)$, and $e_{00} (n-1)=e_{00} (n-2)+1$.
This together with the induction hypothesis and the second equality in~\eqref{eq:foo2} yields Equation~\eqref{eq:foo3}.
The other two cases can be handled in a similar fashion.

We now claim that
\begin{align}
e_1 (n) &= n/2 + O(1), \label{e1}\\
e_{00} (n) &= n/6 + O(\log n), \label{e00}\\
e_{10} (n) &= n/3 + O(\log n). \label{e10}
\end{align}
Eq.~\eqref{e1} is trivial.
To see Eq.~\eqref{e00}, an easy induction shows that 
$$
    e_{00}(2^i) = \frac{2^i - 3 - (-1)^i}{6},
$$
for all $i\ge 1$ and furthermore that the number of $00$'s in the length-$2^i$ prefix of the complement ${\overline{\bf t}}$ is $\frac{2^{i-1} + (-1)^i}{3}$ for all $i\ge 1$.
Write $n$ in binary as $\sum_{i=0}^{k} w_i 2^i$; then from our estimates we see that $e_{00} (n)$ is $\sum_{i=0}^{k} (w_i \frac{2^i}{6} + O(1))$, and the
estimate follows from the fact that $k \leq \log_2 n$.
A similar proof works for $e_{10}$ to establish~\cref{e10}.

These estimates now suffice to prove the claim of~\cref{thm: behavior of bn}.
\qed
\end{proof}

Define $L_J'$ to be the subset of $L_J$ that contains binary words not starting with $0$.
Define $(v_n)_{n \geq 0}$ to be the characteristic sequence of $L_J'$ in base $2$.
The following observation particularly shows that one cannot use the alternative methods of~\cref{sec:alternative approach} to study the behavior of the summatory function of the sequence $(v_n)_{n \geq 0}$.

\begin{proposition}
\label{pro:non-auto}
    The sequences $(u_n)_{n\ge 0}$ and $(v_n)_{n \geq 0}$ are not $2$-regular.
\end{proposition}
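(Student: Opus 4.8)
The goal is to show that neither $(u_n)_{n\ge 0}$ nor $(v_n)_{n\ge 0}$ is $2$-regular. My strategy is to argue by contradiction from the known growth and arithmetic structure of these sequences, exploiting the fact that a $2$-regular sequence of non-negative integers must have a summatory function governed by the eigenvalues of a sum matrix (as in~\cref{thm: Drmota and Grabner}), which forces the growth exponent $\log_2\lambda$ to be the logarithm of an \emph{algebraic} number arising as an eigenvalue of an integer matrix. By~\cref{thm: behavior of bn}, we have $u_n = \Theta(n^{c}\,\alpha^n)$ with $\alpha = \sqrt[6]{24}$; equivalently $\log_2 u_n = n\log_2\alpha + O(\log n)$, where $\log_2\alpha = \tfrac16\log_2 24$ is irrational and not of the form $\log_2 r$ for an algebraic integer $r$ that could appear as the dominant eigenvalue of the sum matrix of a $2$-regular representation. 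The plan is to turn this growth-rate mismatch into a contradiction.

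\medskip

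\textbf{Key steps.} First I would establish the general constraint: if $(u_n)_{n\ge 0}$ were $2$-regular with linear representation $(V, M_0, M_1, W)$, then $u_n = V M_{w_k}\cdots M_{w_0} W$, and standard theory (e.g.\ the growth theory of regular sequences, cf.\ the references in~\cref{sec:alternative approach}) implies $u_n = O(\rho^n \cdot \mathrm{poly}(n))$ where $\rho = \max(\|M_0\|,\|M_1\|)$, and more precisely the achievable exponential growth rates along arithmetic-progression subsequences are constrained by the joint spectral behavior of $M_0$ and $M_1$. Second, I would use the explicit product formula~\eqref{eq:foo3} together with the estimates~\eqref{e1}--\eqref{e10} to pin down the precise growth: along the full sequence $u_n \approx (2^{1/2+2/6-1/3}3^{1/3-1/6})^n = (2^{1/2}3^{1/6})^n = 24^{n/6} = \alpha^n$. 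The crucial point is that the growth exponent $\alpha = 24^{1/6}$ is \emph{not} a value attainable from a $\{0,1\}$-indexed matrix product of a fixed finite-dimensional integer (or rational) representation, because such products yield growth rates that are eigenvalues of products of the $M_i$, and the Thue--Morse-driven alternation between the three regimes in~\eqref{eq:foo2} (ratios $2$, $4/3$, $3/2$) cannot be realized by a \emph{finite} set of matrices indexed by digits. Third, to make this rigorous for $2$-regularity specifically, I would examine the $2$-kernel: I would compute ratios $u_{2n+r}/u_n$ or, better, consider the sequence $(\log_2 u_n - \tfrac{n}{2}\log_2 24)$ and show that if the $2$-kernel of $(u_n)$ spanned a finite-dimensional $\mathbb{Q}$-vector space, then the correction term would satisfy a linear recurrence incompatible with its actual $\mathbf{t}$-driven (and hence aperiodic, non-automatic) fluctuation. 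Finally, the statement for $(v_n)$ would follow from that for $(u_n)$: since $v_n$ counts words in $L_J'$ (no leading zero) and $u_n$ counts all words in $L_J$, one has a simple linear relation (of the shape $u_n = v_n + u_{n-1}$, from prepending a $0$), so $2$-regularity of one would transfer to the other, and non-regularity of $u$ forces non-regularity of $v$.

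\medskip

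\textbf{Main obstacle.} The hard part will be step three: proving non-$2$-regularity rigorously rather than merely observing a ``suspicious'' growth rate. Irrationality of $\log_2\alpha$ alone does \emph{not} preclude $2$-regularity — regular sequences can certainly have irrational growth exponents (indeed $\log_2(5+2\sqrt6)$ appeared earlier). The genuine obstruction must come from the \emph{aperiodicity} of the multiplicative structure: the factor that $u_n$ picks up in passing from $n-1$ to $n$ depends on $t_{n-2}$ and $t_{n-3}$, i.e.\ on Thue--Morse, which is $2$-automatic but whose induced multiplicative weighting produces a correction term $\log_2 u_n - \tfrac{n}{2}\log_2 24$ controlled by $e_{00}(n)-\tfrac{n}{6}$ and $e_{10}(n)-\tfrac{n}{3}$; these oscillate with amplitude $\Theta(\log n)$ (from~\eqref{e00}--\eqref{e10}) and in a self-similar-but-unbounded way. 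I would make the contradiction precise by assuming finiteness of the $2$-kernel and deriving that the summatory or the correction sequence would then be bounded by a polynomial times a clean power, contradicting the $\Theta(\log n)$-amplitude \emph{unbounded} fluctuation forced by the block-counting functions; the delicate point is showing these fluctuations cannot be absorbed into the periodic function $\Phi$ of~\cref{thm: Drmota and Grabner}, which would require that no choice of finite-dimensional representation reproduces the $24^{1/6}$ rate simultaneously with the correct sub-exponential correction.
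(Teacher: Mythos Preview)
Your plan contains two genuine gaps.

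\medskip

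\textbf{For $(u_n)$.} You write that a $2$-regular sequence with representation $(V,M_0,M_1,W)$ satisfies $u_n = O(\rho^n \cdot \mathrm{poly}(n))$ with $\rho = \max(\|M_0\|,\|M_1\|)$. This is incorrect: in the formula $u_n = V M_{w_k}\cdots M_{w_0} W$, the number of matrix factors is $k+1 \approx \log_2 n$, not $n$. Hence the correct bound is $u_n = O(\rho^{\log_2 n}) = O(n^{\log_2 \rho})$, i.e.\ every $2$-regular sequence grows at most \emph{polynomially} in $n$ (this is \cite[Theorem~16.3.1]{Allouche-Shallit-2003}, the reference the paper invokes). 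Since~\cref{thm: behavior of bn} gives $u_n \geq n^{c_1}\alpha^n$ with $\alpha>1$, non-$2$-regularity of $(u_n)$ is immediate. All of your discussion about whether $24^{1/6}$ can arise as a joint spectral radius, about aperiodic fluctuations, and about absorbing corrections into $\Phi$ is unnecessary; it is also aimed at ruling out exponential growth rates that $2$-regular sequences never achieve in the first place.

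\medskip

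\textbf{For $(v_n)$.} You have misread the definition. In the paper, $(v_n)_{n\ge 0}$ is the \emph{characteristic sequence} of $L_J'$ in base $2$, meaning $v_n\in\{0,1\}$ according to whether $\rep_2(n)\in L_J'$; it is \emph{not} the count of length-$n$ words in $L_J'$. Your proposed relation $u_n = v_n + u_{n-1}$ is therefore false (the left side is exponential, the right side differs from $u_{n-1}$ by at most $1$), and the transfer argument collapses. Because $(v_n)$ is bounded, the growth obstruction cannot apply; one must instead show it is not $2$-automatic. The paper does this by looking at $n = 3\cdot 2^i - 1$ (binary representation $10\,1^i$): one checks that $v_{3\cdot 2^i - 1} = t_i$, and if $(v_n)$ were $2$-automatic this subsequence would be ultimately periodic by \cite[Theorem~5.5.2]{Allouche-Shallit-2003}, contradicting the overlap-freeness of~$\mathbf t$.
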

\begin{proof}
Writing $u_n = 2^{x_n} 3^{y_n}$, we see from Equation~\eqref{eq:foo3} that $(x_n)_{n \geq 0}$
    and $(y_n)_{n \geq 0}$ are both $2$-regular sequences, but
    $\seq{u}{n}{0}$ is not (as it grows too quickly; see~\cite[Theorem 16.3.1]{Allouche-Shallit-2003}).

    On the other hand, the sequence $(v_n)_{n \geq 0}$ is
    not $2$-automatic. Consider those
    $n$ with base-$2$ representation of 
    the form $1 0 1^i$ for some $i\ge 0$, i.e.,
    $n = 3 \cdot 2^i - 1$.  Then
    $v_n = 1$ if and only if $i$ is odious, i.e., 
    $t_i = 1$.  By a well-known theorem (\cite[Theorem 5.5.2]{Allouche-Shallit-2003}),
    if $\seq{v}{n}{0}$ is $2$-automatic, then
    $(v_{3 \cdot 2^i - 1})_{i\ge 0}$ is ultimately periodic.
    But $v_{3 \cdot 2^i - 1} = t_i$, and ${\bf t} = (t_i)_{i \geq 0}$ cannot be ultimately periodic because it is overlap-free.
    As $(v_n)_{n \geq 0}$ takes finitely many values, it cannot be $2$-regular either.
    This ends the proof. \qed
\end{proof}

Nevertheless,  we can handle the behavior of the summatory function of $\seq{v}{n}{0}$, and thus the abscissa of convergence of the corresponding Dirichlet 
    series for the language $L_J'$, as follows.

\begin{corollary}
\label{cor:evil}
    Let $\alpha = \sqrt[6]{24}$.
    The abscissa of convergence of the restricted Dirichlet 
    series $F_{L_J'}(z)$ is $\frac{\log \alpha}{\log 2}$.    
    \end{corollary}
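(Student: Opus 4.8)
The plan is to apply \cref{lem:Titchmarsh} to the characteristic sequence $\seq{v}{n}{0}$ of $L_J'$; its partial sums $A(n) = \sum_{1 \le i \le n} v_i$ are positive and tend to infinity (every all-ones word $1^m$ lies in $L_J'$), so the abscissa of convergence of $F_{L_J'}(z)$ equals $\limsup_{n\to\infty} \frac{\log A(n)}{\log n}$. As in the proofs of \cref{thm:abscissa-convergence-blocks-12-21,thm: avoiding block of a}, I would then invoke \cref{le:useful} with $b=2$ and $\lambda = \alpha$, which reduces the entire statement to establishing that $A(2^k) = \alpha^{k(1+o(1))}$, equivalently $\log A(2^k) \sim k \log \alpha$. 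Since \cref{thm: behavior of bn} already pins down the growth of $\seq{u}{n}{0}$, the real work is to transfer that control from the word-counts $u_n$ to the summatory function $A$.

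The bridge is a leading-zero decomposition. Write $u_n'$ for the number of length-$n$ words in $L_J'$, i.e.\ those not starting with $0$. Since the binary representations of the integers in $[2^{j-1}, 2^j - 1]$ are exactly the length-$j$ words beginning with $1$, we have $A(2^n - 1) = \sum_{j=1}^{n} u_j'$. Next I would observe that a length-$n$ word of $L_J$ either begins with $1$ (and then lies in $L_J'$) or begins with $0$; deleting a leading $0$ is harmless, because the positions of the remaining letters are unchanged and a $0$ in the top position can never be the left letter of a forbidden factor $10$. This yields the exact relation $u_n = u_{n-1} + u_n'$, that is, $u_n' = u_n - u_{n-1}$.

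The key quantitative point is that passing to $L_J'$ does not lower the growth rate. From \eqref{eq:foo2} each of the three cases gives $u_n / u_{n-1} \ge \tfrac{4}{3}$, whence $u_n' = u_n - u_{n-1} \ge \tfrac{1}{3} u_{n-1}$; combined with the trivial bound $u_n' \le u_n$ and the estimates $n^{c_1} \alpha^n \le u_n \le n^{c_2} \alpha^n$ of \cref{thm: behavior of bn}, this sandwiches $u_n'$ between two polynomial multiples of $\alpha^n$. Summing the (essentially geometric) series $\sum_{j=1}^n u_j'$ then shows it is dominated by its last term up to a polynomial factor, so that $\log A(2^n - 1) = n \log \alpha + O(\log n)$, i.e.\ $A(2^n - 1) = \alpha^{n(1+o(1))}$; adding the negligible contribution $v_{2^n} \in \{0,1\}$ gives the same estimate for $A(2^n)$.

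Feeding $A(2^k) = \alpha^{k(1+o(1))}$ into \cref{le:useful} yields $\frac{\log A(n)}{\log n} \to \frac{\log \alpha}{\log 2}$, and since this limit exists the $\limsup$ of \cref{lem:Titchmarsh} equals $\frac{\log \alpha}{\log 2}$, as claimed. I expect the main obstacle to be precisely this transfer step: one must be sure that discarding the words with leading zeroes (equivalently, the passage from $u_n$ to the difference $u_n - u_{n-1}$) retains the full exponential growth $\alpha^n$ rather than collapsing to something smaller, and it is the uniform lower bound $u_n / u_{n-1} \ge \tfrac{4}{3}$ coming from \eqref{eq:foo2} that guarantees this. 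Everything else is the routine geometric-series bookkeeping that \cref{le:useful} is designed to absorb.
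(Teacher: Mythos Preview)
Your argument is correct and follows the paper's route: the leading-zero decomposition, then \cref{le:useful} and \cref{lem:Titchmarsh}. Note, however, that your own identity $u_n' = u_n - u_{n-1}$ telescopes to $A(2^n-1) = \sum_{j=1}^n u_j' = u_n - u_0 = u_n - 1$, so the ``transfer step'' you flag as the main obstacle is immediate and the ratio bound from \eqref{eq:foo2} together with the geometric-series estimate are unnecessary---this is precisely how the paper obtains $n^{c_1}\alpha^n \le A(2^n-1) \le n^{c_2}\alpha^n$ in one line from \cref{thm: behavior of bn}.
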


\begin{proof} 
For all $n\ge 0$, we let $A(n)= \sum_{0\le i\le n} v_i$.
By~\cref{thm: behavior of bn}, since leading zeroes are allowed in $L_J$, we have $n^{c_1} \alpha^n \leq A(2^n-1) \leq n^{c_2} \alpha^n$.
Now we may apply~\cref{le:useful} first, then~\cref{lem:Titchmarsh} to obtain the statement. \qed
\end{proof}

\section{Conclusion}
\label{sec:conclusion}

We start this conclusion by noting that the sequences of coefficients of the Dirichlet series studied in \cite{Kohler-Spilker-2009,Nathanson2021} are binary automatic sequences, as well as those presented in~\cref{sec:myriad,sec:alternative approach}.
They are in fact related to languages whose characteristic sequences are automatic.
It is worth briefly explaining why {\em automatic} Dirichlet series are interesting.
Recall from~\cref{sec:notation and def} that a sequence $(s_n)_n$ is $b$-automatic for some $b \geq 2$ if its $b$-kernel is finite.
This finiteness implies equalities between subsequences of the $b$-kernel, which in turn yield properties (e.g., functional properties) of the corresponding Dirichlet series, hence of the Dirichlet series we started from.
A similar conclusion on Dirichlet series might be drawn in the case of $b$-regular sequences.
This observation has been used in different contexts (see, e.g., \cite{Alkauskas-2004,AMFP,Allouche-Cohen-1985,Coons2010,Coons-Lind-2024,Everlove-2002,Flajolet-etal-1994,Toth-2022} and~\cite[Section~4.2.6]{Queffelec-Queffelec-2020}). (We note that in~\cite{Alkauskas-2004} particular completely multiplicative functions are considered, which are $b$-regular as noted in~\cite{Drmota-Grabner}, while~\cite{Everlove-2002} focuses on sum-of-digits functions, which form a typical example of $b$-regular sequences~\cite{Allouche-Shallit-2003}, and~\cite{Allouche-Cohen-1985,Toth-2022} concern the Thue--Morse sequence).
The Dirichlet series we are interested in here are related to languages where some patterns are avoided, an active research topic in combinatorics on words.
Another class of Dirichlet series related to combinatorics on words that has been studied consists of Dirichlet series with Sturmian coefficients (see, e.g., \cite{Kwon2015,Sourmelidis2019}).
A class of Dirichlet series more general than the Sturmian series was studied in \cite{Knill-Lesieutre}.

In this paper, we have chosen to give various methods and examples to examine the abscissa of convergence of particular Dirichlet series.
All our examples are related to counting words with missing digits or having some prescribed structure belonging to some language $L$.
In the first part of the paper (up to~\cref{sec:alternative approach}), the Dirichlet series $F_L(z)$ turns out to have automatic coefficients and we present a range of methods to handle this case, as well as a general statement with~\cref{thm: general abscissa of convergence automatic sec}.
Nonetheless, the methods presented in~\cref{sec:alternative approach}, although powerful, do not cover the case of non-regular Dirichlet series.
This is the topic of~\cref{sec:unusual} where we exhibit a language $L$ whose characteristic sequence $(s_L(n))_{n\ge 0}$ is not $b$-regular.
There, we use the power of~\cref{lem:Titchmarsh} that states that the abscissa of convergence of $F_L(z)$ is given by the growth of the summatory function of $(s_L(n))_{n\ge 0}$.
The key to obtain the abscissa of convergence is therefore to \emph{count} the words in $L$ (as we did in~\cref{sec:myriad}).
We mention that, to do this, other approaches than ours can be used (we think, e.g., of \cite{Guibas-Odlyzko-81}), and we have already mentioned the ``Goulden-Jackson cluster method'' and the paper \cite{Noonan-Zeilberger-1999}.

From the various results of the paper, we observe that the abscissas of convergence of our Dirichlet series all involve the quotient of the logarithms of two integers.
We thus leave the following as an open problem: for which sequences $\seq{s}{n}{1}$, does the corresponding Dirichlet series $F_s(z)$ have an abscissa of convergence that is the quotient of the logarithms of two integers?
Similarly, do the cases we have described here call for a general theorem and how general such a theorem can be?
We have also mentioned, in~\cref{ex: simple pole for all distinct blocks}, an
open problem related to~\cite{Everlove-2002}.

\subsection*{Acknowledgments}
Manon Stipulanti is an FNRS Research Associate supported by the Research grant 1.C.104.24F.
Jeffrey Shallit's research was supported by NSERC grants 2018-04118 and 2024-03725.

We thank France Gheeraert, Daniel Krenn, Eric Rowland, and Pierre Stas for useful 
discussions when this work was in its early stages.

We also thank the anonymous referees whose careful reading helped improving the paper, especially for enriching~\cref{sec:alternative approach} (starting from~\cref{thm: general abscissa of convergence automatic sec}), and for
correcting an error in our application of the Goulden-Jackson method.



%
%
%
\bibliographystyle{splncs04}
\bibliography{biblio.bib}
\end{document}